\theoremstyle{definition}
\newtheorem{definition}{Definition}[section]
\theoremstyle{plain}
\newtheorem{lemma}[definition]{Lemma}
\newtheorem{theorem}[definition]{Theorem}
\theoremstyle{remark}
\newtheorem{remark}[definition]{Remark}
\newtheorem{example}[definition]{Example}
\newcommand{\myds}{\operatorname{D}_\Sigma}
\newcommand{\myint}{\operatorname{int}}
\newcommand{\mydc}{\text{DC}}
\newcommand{\mylufs}{\text{SLUF}}
\newcommand{\mylufo}{\text{LUF}_1}
\newcommand{\myluft}{\text{LUF}_2}
\newcommand{\mypd}{\operatorname{proj.dim}}
\begin{document}

\title[uniformly locally o-minimal  open core]{Uniformly locally o-minimal open core}
\author[M. Fujita]{Masato Fujita}
\address{Department of Liberal Arts,
Japan Coast Guard Academy,
5-1 Wakaba-cho, Kure, Hiroshima 737-8512, Japan}
\email{fujita.masato.p34@kyoto-u.jp}

\begin{abstract}
This paper discusses  sufficient conditions for a definably complete densely linearly ordered expansion of an abelian group  having the uniformly locally o-minimal open cores of the first/second kind and strongly locally o-minimal open core, respectively.
\end{abstract}

\subjclass[2010]{Primary 03C64}

\keywords{open core, uniformly locally o-minimal structure, Baire structure}

\maketitle

\section{Introduction}\label{sec:intro}
The open core of a structure is its reduct generated by its definable open sets. 
Dolich et al.\ first introduced the notion of open core and gave a sufficient condition for the structure having an o-minimal open core in \cite{DMS}.
Fornasiero also investigated necessary and sufficient conditions for a definably complete expansion of an ordered field having a locally o-minimal open core in \cite{F}.

A uniformly locally o-minimal structure was first introduced in \cite{KTTT} and a systematic study was made in \cite{Fuji}.
The purpose of this paper is to give sufficient conditions for structures having uniformly locally o-minimal open cores of the first/second kind and having strongly locally o-minimal open core, respectively.
The following theorem is our main theorem.
The notations in the theorem are defined in Section \ref{sec:def}.
\begin{theorem}\label{thm:main}
Consider an expansion of a densely linearly ordered abelian group $\mathcal R=(R,<,+,0,\ldots)$ with $\mathcal R \models \mydc$.
\begin{itemize}
\item If $\mathcal R \models \mylufs$, the structure $\mathcal R$ has a strongly locally o-minimal open core. 
\item If $\mathcal R \models \mylufo$, the structure $\mathcal R$ has a uniformly locally o-minimal open core of the first kind. 
\item If $\mathcal R \models \myluft$, the structure $\mathcal R$ has a uniformly locally o-minimal open core of the second kind. 
\end{itemize}
\end{theorem}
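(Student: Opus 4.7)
My plan is to prove the three items in parallel, since they share a common structure differing only in the strength of uniformity demanded. Fix a set $S \subseteq R$ definable in the open core $\mathcal R^\circ$. By the definition of the open core, $S$ is a Boolean combination of projections of definable open sets of $\mathcal R$. Using $\mydc$, I can form definable interiors, closures, and frontiers of definable sets, which lets me put $S$ into a normal form whose basic building blocks are slices of a single definable open family $\{U_t\}_{t \in T}$ with each $U_t \subseteq R$ open.

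The heart of the argument is then the following reduction. Each open slice $U_t$ is a disjoint union of open intervals; to prove local o-minimality of $S$ at a point $p \in R$, it suffices to show that only finitely many of these intervals meet a sufficiently small neighborhood of $p$, with the appropriate uniformity. This is exactly what the three uniform local finiteness hypotheses are tailored to deliver: $\mylufs$ furnishes such finiteness for each fixed $t$ and $p$; $\mylufo$ strengthens this by allowing $t$ to vary over a definable neighborhood while $p$ is held fixed; $\myluft$ further permits $p$ to vary in a small neighborhood. The remaining work is to verify that Boolean operations, taking projections, and passing to frontiers all preserve the relevant form of local finiteness, which is a routine but careful bookkeeping step that exploits $\mydc$ to control the size of the frontiers.

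The main obstacle I expect is the $\myluft$ case. Here I need uniformity \emph{simultaneously} in the family parameter $t$ and in the base point $p$, and this does not follow formally from a pointwise statement. I anticipate needing a genuine compactness-type argument, which is where the ``Baire structure'' keyword of the paper should come in: namely, the Baire category property for definably complete expansions of densely ordered abelian groups. The strategy is to argue by contradiction, expressing the set of ``bad'' base points at which uniformity fails as a definable set that would have to be meager in every neighborhood, and then deriving a contradiction from a suitable definable Baire category theorem together with $\myluft$ applied at a single good point.
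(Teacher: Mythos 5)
Your reduction at the outset contains the real gap: you assert that a set definable in the open core ``is a Boolean combination of projections of definable open sets of $\mathcal R$.'' That normal form is not given by the definition of the open core; sets definable there arise from open definable sets by \emph{arbitrary alternations} of Boolean operations and projections, and nothing a priori lets you push all quantifiers inside to a single layer of projections. Obtaining such control is exactly the substantive content of the paper: one works with the class of $\myds$-sets (which contains constructible definable sets and is closed under projections, unions, intersections and fibers, but \emph{not} under complements) and proves, by induction on a newly defined dimension, that every $\myds$-set is constructible (Theorem \ref{thm:main2}). Only this constructibility theorem closes the loop that allows complements to be taken again after a projection, and hence shows that every open-core-definable set, in every arity, is constructible and $\myds$. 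Your ``routine but careful bookkeeping step'' is precisely this theorem, and its proof requires the key lemma that a $\myds$-set with nonempty interior has some member $X_{r,s}$ of its defining family with nonempty interior (Lemma \ref{lem:bb1}), the partition lemma (Lemma \ref{lem:bb2}), the projective-dimension inequality for the set of fibers containing an interval (Lemma \ref{lem:interior3}), and the induction via the set $\mathcal G(X)$ of good points (Lemma \ref{lem:good_point}). None of this is supplied or replaced by anything in your outline.

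Two further points. First, your description of the hierarchy is inverted: $\mylufs$ is the strongest hypothesis (the interval can be chosen independently of the definable family) and $\myluft$ the weakest (both the interval and the parameter neighborhood are only local), matching the strength of the respective conclusions; as written, your account has $\myluft$ providing \emph{more} uniformity than $\mylufs$. Second, the anticipated Baire-category compactness argument for the $\myluft$ case addresses a difficulty that is not where the work lies: once constructibility is known, the fiberwise uniformity statements follow directly from the locally uniform finiteness hypotheses via Lemmas \ref{lem:base1}, \ref{lem:base2} and \ref{lem:uniform2_2}, which are proved with lexicographic-minimum and definable-completeness arguments rather than a definable Baire category theorem. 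So the proposal, as it stands, assumes the paper's main theorem in its first step and proposes machinery that does not target the actual obstruction.
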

We can prove the theorem basically following the same strategy as the proof of \cite[Theorem A]{DMS}.
We first review the notion of $\myds$-families and $\myds$-sets used in the proof of \cite[Theorem A]{DMS} in Section \ref{sec:preliminaries}.
The key lemma is that, for a $\myds$-family $\{X_{r,s}\}_{r>0,s>0}$, the set $X_{r,s}$ has a nonempty interior for some $r>0$ and $s>0$ when the $\myds$-set $X=\bigcup_{r>0,s>0}X_{r,s}$ has a non empty interior.
This lemma holds true both for the o-minimal open core case \cite[3.1]{DMS} and our case.
However, a new investigation is necessary to demonstrate the lemma in our case.
Basic lemmas including the above lemma are proved in Section \ref{sec:basic}.
We cannot  use the same definition of dimension of $\myds$-sets as \cite{DMS}.
We give a new definition of dimension in Section \ref{sec:preliminaries}.
We finally prove the theorem in Section \ref{sec:open_core} using this concept.

We introduce the terms and notations used in this paper.
The term `definable' means `definable in the given structure with parameters' in this paper.
A \textit{CBD} set is a closed, bounded and definable set.
A \textit{CDD} set is a closed, discrete and definable set.
For any set $X \subset R^{m+n}$ definable in a structure $\mathcal R=(R,\ldots)$ and for any $x \in R^m$, the notation $X_x$ denotes the fiber defined as $\{y \in R^n\;|\; (x,y) \in X\}$.
For a linearly ordered structure $\mathcal R=(R,<,\ldots)$, an open interval is a definable set of the form $\{x \in R\;|\; a < x < b\}$ for some $a,b \in R$.
It is denoted by $(a,b)$ in this paper.
We define a closed interval in the same manner and it is denoted by $[a,b]$.
An open box in $R^n$ is the direct product of $n$ open intervals.
A closed box is defined similarly.
Let $A$ be a subset of a topological space.
The notations $\myint(A)$ and $\overline{A}$ denote the interior and the closure of the set $A$, respectively.
The notation $|S|$ denotes the cardinality of a set $S$.

\section{Definitions}\label{sec:def}
We review the definitions and the assertions introduced in the previous studies.
A \textit{constructible} set is a finite boolean combination of open sets.
Note that every constructible definable set is a finite boolean combination of open definable sets by \cite{DM}.
The definition of a definably complete structure is found in \cite{M}.
The notation $\mathcal R \models \mydc$ means that the structure $\mathcal R$ is definably complete.
A locally o-minimal structure is defined and investigated in \cite{TV}.
Readers can find the definitions of uniformly locally o-minimal structures of the first/second kind in \cite{Fuji}.
The open core of a structure is defined in \cite{DMS}.

We give the definitions used in Theorem \ref{thm:main}.

\begin{definition}[Local uniform finiteness]
A densely linearly ordered structure $\mathcal R=(R,<,\ldots)$ satisfies \textit{locally uniform finiteness of the second kind} -- for short, $\mathcal R \models \myluft$ if, for any definable subset $X \subset R^{m+1}$, $a \in R$ and $b \in R^m$, there exist 
\begin{itemize}
\item a positive integer $N$, 
\item a closed interval $I$ with $a \in \myint(I)$ and 
\item an open box $U \subset R^m$ with $b \in U$ 
\end{itemize}
such that $|X_x \cap J|=\infty$ or $|X_x \cap J| \leq N$ for all $x \in U$ and all closed intervals $J$ with $a \in \myint(J) \subset I$.

An easy induction shows that $\mathcal R \models \myluft$ if and only if, for any definable subset $X \subset R^{m+n}$ and $(b,a) \in R^{m+n}$, there exist a positive integer $N$, a closed box $B$ with $a \in \myint(B)$ and an open box $U$ with $b \in U$ such that $|X_x \cap B'|=\infty$ or $|X_x \cap B'| \leq N$ for all $x \in U$ and all closed boxes $B'$ with $a \in \myint(B') \subset B$.

The structure $\mathcal R=(R,<,\ldots)$ satisfies \textit{locally uniform finiteness of the first kind} -- for short, $\mathcal R \models \mylufo$ if $\mathcal R \models \myluft$ and we can take $U=R^m$ in the definition of $\myluft$.
The structure $\mathcal R=(R,<,\ldots)$ satisfies \textit{strongly locally uniform finiteness} -- for short, $\mathcal R \models \mylufs$ if $\mathcal R \models \mylufo$ and we can take $I$ independently of the definable set $X \subset R^{m+1}$.
\end{definition}

\begin{remark}
A definably complete uniformly locally o-minimal structure of the second kind satisfies  locally uniform finiteness of the second kind by \cite[Theorem 4.2]{Fuji}.
\end{remark}

\begin{remark}
Consider a locally o-minimal expansion of the group of reals $\widetilde{\mathbb R}=(\mathbb R, <,0,+,\ldots)$.
The following assertion is \cite[Theorem 4.3]{Fuji2}.
\begin{quotation}
For any definable subset $X$ of $\mathbb R^{n+1}$, there exist a positive element $r \in \mathbb R$ and a positive integer $K$ such that, for any $a \in \mathbb R^n$, the definable set $X \cap (\{a\} \times (-r,r))$ has at most $K$ connected components.
\end{quotation}
By reviewing its proof, it is easy to check that the above $r$ can be taken independently of $X$ because $\widetilde{\mathbb R}$ is strongly locally o-minimal by \cite[Corollary 3.4]{TV}.
Therefore, we have $\widetilde{\mathbb R} \models \mylufs$.
\end{remark}

\begin{example}
We give an example of a structure which has a strongly locally o-minimal open core, but is not a locally o-minimal.

We first consider the language $L_1=\{0,1,+,-,<\}$.
We define $L_1$-structures $\widetilde{\mathbb Q}=(\mathbb Q, 0^{\widetilde{\mathbb Q}},1^{\widetilde{\mathbb Q}},+^{\widetilde{\mathbb Q}},-^{\widetilde{\mathbb Q}},<^{\widetilde{\mathbb Q}})$ and $\widetilde{\mathbb R}=(\mathbb R, 0^{\widetilde{\mathbb R}},1^{\widetilde{\mathbb R}},+^{\widetilde{\mathbb R}},-^{\widetilde{\mathbb R}},<^{\widetilde{\mathbb R}})$ naturally.
It is easy to demonstrate that both $\widetilde{\mathbb Q}$ and $\widetilde{\mathbb R}$ have quantifier elimination.
We can also easy to demonstrate that $\widetilde{\mathbb Q}$ is an elementary substructure of $\widetilde{\mathbb R}$ using the Tarski-Vaught test.
They are both o-minimal structures.

The structures $\widetilde{[0,1)_{\mathbb Q}}$ and $\widetilde{[0,1)_{\mathbb R}}$ are the restrictions of $\widetilde{\mathbb Q}$ and $\widetilde{\mathbb R}$ to the sets $[0,1)_{\mathbb Q}=\{x \in \mathbb Q\;|\; 0 \leq x < 1\}$ and $[0,1)_{\mathbb R}=\{x \in \mathbb R\;|\; 0 \leq x < 1\}$ defined in \cite[Definition 2]{KTTT}, respectively.
The structure $\widetilde{[0,1)_{\mathbb Q}}$ is again an elementary substructure of $\widetilde{[0,1)_{\mathbb R}}$.
The notation $\mathcal M = \left(\widetilde{[0,1)_{\mathbb R}}, \widetilde{[0,1)_{\mathbb Q}}\right)$ denotes their dense pair. 
The definition of dense pairs is found in \cite{vdD2}.
The dense pair $\mathcal M$ satisfies uniform finiteness by \cite[Corollary 4.5]{vdD2}.
The notion of uniform finiteness is introduced in \cite{DMS}.

We next consider the language $L=\{0,1,+,-,<, P_{\mathbb Z}, P_{\mathbb Q}\}$, where $P_{\mathbb Z}$ and $P_{\mathbb Q}$ are unary predicates.
We define an $L$-structure $\mathcal R=(\mathbb R, 0^{\mathcal R},1^{\mathcal R},+^{\mathcal R},-^{\mathcal R},<^{\mathcal R}, P_{\mathbb Z}^{\mathcal R}, P_{\mathbb Q}^{\mathcal R})$ as follows:
\begin{itemize}
\item $\mathcal R \models P_{\mathbb Z}^{\mathcal R}(x)$ if and only if $x \in \mathbb Z$;
\item $\mathcal R \models P_{\mathbb Q}^{\mathcal R}(x)$ if and only if $x \in \mathbb Q$.
\end{itemize}
The following claim is proved by the induction on the complexity of the formula defining the definable set $X$. We omit the proof.

\medskip
\textbf{Claim.} Let $X$ be a subset of $\mathbb R^n$ definable in $\mathcal R$.
There exist finite subsets $X_1, \ldots X_k$ of $[0,1)_{\mathbb R}$ definable in the dense pair $\mathcal M$ and a map $\iota:\mathbb Z^n \rightarrow \{1, \ldots, k\}$ such that, for any $z =(z_1, \ldots, z_n) \in \mathbb Z^n$, we have 
\begin{center}
$X \cap \left(\displaystyle\prod_{i=1}^n [z_i,z_i+1)\right)= z + X_{\iota(z)}$, 
\end{center}
where $[c,d)=\{x \in R\;|\; c \leq x < d\}$ and $z + X_{\iota(z)} = \{(x_1, \ldots, x_n) \in \mathbb R^n\;|\; (x_1-z_1,\ldots, x_n-z_n) \in X_{\iota(z)}\}$.
\medskip

The structure $\mathcal R$ is not locally o-minimal because the set $\mathbb Q$ is definable in $\mathcal R$.
We have $\mathcal R \models \mylufs$ by the above claim because $\mathcal M$ satisfies uniform finiteness.
We also have $\mathcal R \models \mydc$ because the universe $\mathbb R$ is complete.
The structure $\mathcal R$ has a strongly locally o-minimal open core by Theorem \ref{thm:main}.
\end{example}

\section{$\myds$-sets and its dimension}\label{sec:preliminaries}
We can apply the same strategy as \cite{DMS} to our problem.
Dolich et al.\ used the notion of $\myds$-sets in \cite{DMS}.
They play an important role also in this paper.
\begin{definition}[$\myds$-sets]
Consider an expansion of a densely linearly ordered abelian group $\mathcal R=(R,<,+,0\ldots)$.
A \textit{parameterized family} of definable sets are the family of the fibers of a definable set.
A parameterized family $\{X_{r,s}\}_{r>0,s>0}$ of CBD subsets of $R^n$ is called a \textit{$\myds$-family} if $X_{r,s} \subset X_{r',s}$ and  $X_{r,s'} \subset X_{r,s}$ whenever $r \leq r'$ and $s \leq s'$.
A definable subset $X$ of $R^n$ is a \textit{$\myds$-set} if $X = \displaystyle\bigcup_{r>0,s>0} X_{r,s}$ for some $\myds$-family $\{X_{r,s}\}_{r>0,s>0}$.
\end{definition}

The following two lemmas are found in \cite{DMS,M}.
\begin{lemma}\label{lem:quo}
Consider an expansion of a densely linearly ordered abelian group $\mathcal R$ with $\mathcal R \models \mydc$.
The following assertions are true:
\begin{enumerate}[(1)]
\item The projection image of a $\myds$-set is $\myds$.
\item Fibers, finite unions and finite intersections of $\myds$-sets are $\myds$.
\item Every constructible definable set is $\myds$.
\end{enumerate}
\end{lemma}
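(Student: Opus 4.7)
The plan is to prove (1) and (2) by unpacking the definition of a $\myds$-family and checking that CBD-ness together with the two monotonicity conditions is preserved by the relevant set operations; (3) will then follow by reducing, via de Morgan's laws and part (2), to the case of a single open or closed definable set.

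For (2), given $\myds$-families $\{X_{r,s}\}$ and $\{Y_{r,s}\}$ realizing $X$ and $Y$, I would show that $\{(X_{r,s})_a\}$, $\{X_{r,s} \cup Y_{r,s}\}$, and $\{X_{r,s} \cap Y_{r,s}\}$ are $\myds$-families for $X_a$, $X \cup Y$, and $X \cap Y$ respectively. In every case CBD-ness of the members follows from the facts that fibers, finite unions, and finite intersections of CBD sets are CBD, and both monotonicities in $r$ and $s$ are inherited trivially. The only identity that requires a brief argument is $X \cap Y = \bigcup_{r,s}(X_{r,s} \cap Y_{r,s})$: given $z \in X \cap Y$ witnessed by $(r_1,s_1)$ and $(r_2,s_2)$, one takes $r := \max(r_1,r_2)$ and $s := \min(s_1,s_2)$ and uses monotonicity to land $z$ in both $X_{r,s}$ and $Y_{r,s}$.

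For (1), if $\pi : R^{m+n} \to R^m$ denotes the projection, I would verify that $\{\pi(X_{r,s})\}$ is a $\myds$-family for $\pi(X)$. Definability, boundedness, and both monotonicity conditions transfer to $\pi(X_{r,s})$ without difficulty, and $\pi(X) = \bigcup_{r,s} \pi(X_{r,s})$ is direct. The main obstacle lies in showing $\pi(X_{r,s})$ is closed; this is where definable completeness enters essentially, since in such a structure CBD sets play the role of compact sets and the definable continuous image of a CBD set is CBD. I would invoke this fact from \cite{M}.

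For (3), the result of \cite{DM} cited above says that every constructible definable set is a finite Boolean combination of open definable sets; by de Morgan's laws it can then be written as a finite union of finite intersections of open and closed definable sets. Thanks to (2), it suffices to exhibit each open and each closed definable set as a $\myds$-set. For a closed $C \subset R^n$, the $s$-independent family $X_{r,s} := C \cap [-r,r]^n$ works. For an open $U \subset R^n$ set $F := R^n \setminus U$; if $F = \emptyset$ we are back in the closed case, otherwise definable completeness justifies defining the sup-norm distance $d(x,F) := \inf \{\max_i |x_i - y_i| : y \in F\}$, and I would take $X_{r,s} := \{x \in [-r,r]^n : d(x,F) \geq s\}$. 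Continuity of $d(\cdot,F)$ makes each $X_{r,s}$ CBD; the monotonicities are immediate; and $\bigcup_{r,s} X_{r,s} = U$ because any point of the open set $U$ has an open box around it contained in $U$, forcing $d(x,F) > 0$.
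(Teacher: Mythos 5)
Your proposal is correct. Note, though, that the paper does not actually argue these points: it disposes of the lemma by citation, attributing (1) to \cite[Lemma 1.7]{M} and (2), (3) to \cite[1.9(1), 1.10(1)]{DMS}. What you have written is a self-contained verification of essentially the same standard facts that those citations encapsulate. Your treatment of the only genuinely delicate points is right: for intersections you correctly take the maximum in $r$ and the \emph{minimum} in $s$ (since the families shrink as $s$ grows), for projections you correctly isolate closedness of $\pi(X_{r,s})$ as the place where definable completeness is needed (CBD sets acting as the substitute for compact sets, as in \cite{M}), and for (3) you correctly avoid complements of $\myds$-sets by passing to a disjunctive normal form in open and closed definable sets and then exhibiting closed sets via truncation by boxes $[-r,r]^n$ and open sets via the definable sup-norm distance to the complement, whose infimum exists by definable completeness. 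So the difference from the paper is only one of presentation (direct proof versus citation), not of mathematical route; your version buys self-containedness at the cost of length, which is a reasonable trade.
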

\begin{proof}
(1) Immediate from \cite[Lemma 1.7]{M}.\ 
(2) \cite[1.9(1)]{DMS}.\ 
(3) \cite[1.10(1)]{DMS}.
\end{proof}

\begin{lemma}\label{lem:interior0}
Let $\mathcal R=(R,<,+,0,\ldots)$ be an expansion of a densely linearly ordered abelian group  with $\mathcal R \models \mydc$.
A CBD set $X \subset R^{n+1}$ has a nonempty interior if the CBD set 
\begin{center}
$\{x \in R^n\;|\; X_x \text{ contains a closed interval of length }s\}$
\end{center}
has a nonempty interior for some $s>0$.
\end{lemma}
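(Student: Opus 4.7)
The plan is to use definable completeness to select, inside each fiber $X_x$, a canonical closed interval of length $s$, and then to find a point at which this selection is constrained to a window of width strictly less than $s$; the resulting local uniformity will exhibit an open box inside $X$.

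First, let $W \subset R^n$ be an open box contained in the interior of the set
\[
Y_s := \{x \in R^n : X_x \text{ contains a closed interval of length } s\},
\]
and fix $M$ with $X \subset R^n \times [-M, M]$. For each $x \in W$, the set
\[
A_x := \{a \in [-M, M-s] : [a, a+s] \subset X_x\}
\]
is nonempty (by $x \in Y_s$), closed, and bounded, so $\mydc$ provides the minimum $f(x) := \min A_x$, yielding a definable bounded function $f : W \to R$ with $[f(x), f(x)+s] \subset X_x$ throughout. The lemma then reduces to finding a nonempty open subbox $U \subset W$ together with $c \leq d$ in $R$ with $d - c < s$ such that $c \leq f(x) \leq d$ for all $x \in U$: for then every $y \in (d, c+s)$ satisfies $f(x) \leq d < y < c + s \leq f(x) + s$, so $y \in [f(x), f(x)+s] \subset X_x$, and $U \times (d, c+s)$ is an open box contained in $X$.

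The core step is producing $U$, $c$, and $d$. Consider the closed definable family
\[
F_{c, d} := \{x \in W : c \leq f(x) \leq d\}, \qquad d - c < s,
\]
which covers $W$. A short exercise in densely linearly ordered abelian groups produces $\delta > 0$ in $R$ with $2\delta < s$; using $\mydc$ one then extracts a definable countable subcover of $W$ by sets of the form $F_{c_i, c_i + 2\delta}$. Applying the definable Baire category theorem available for definably complete structures (cf.\ \cite{M}) forces some $F_{c_i, c_i + 2\delta}$ to have nonempty interior, and this supplies the required $U, c, d$. The main obstacle I expect is navigating the potentially non-Archimedean character of $R$: unlike the ordered-field setting behind \cite[Lemma 3.1]{DMS}, one cannot in general cover $[-M, M-s]$ by finitely many intervals of length less than $s$, so the argument must proceed through definable countable Baire rather than through a finite-cover reduction, and verifying that a suitable countable subcover exists definably is the delicate point.
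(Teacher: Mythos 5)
Your first half is fine: using $\mydc$ to define $f(x)=\min\{a: [a,a+s]\subset X_x\}$ on a box $W\subset\myint(Y_s)$, and the observation that a subbox $U$ on which $f$ varies within some $[c,d]$ with $d-c<s$ yields the open box $U\times(d,c+s)\subset X$, is a correct reduction. The gap is the core step. The family $\{F_{c,c+2\delta}\}_{c\in R}$ is an $R$-indexed definable family, and there is no such thing as a ``definable countable subcover'': countability is not a definable notion, and in a non-archimedean group no countable subfamily need cover $W$ at all. Worse, the ``definable Baire category theorem'' you invoke does not exist in this setting: \cite{M} contains no such result, definable Baireness is an extra hypothesis in general (Fornasiero--Servi), and the one positive theorem in this direction (Hieronymi) requires an ordered \emph{field}, whereas here we only have an ordered group. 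Note also that once you pass to $f$, your plan only uses that $f$ is definable and bounded, and with only those hypotheses the desired conclusion is false: in the definably complete structure $(\mathbb R,<,+,\cdot,\mathbb Z)$ the characteristic function of the irrationals is definable and bounded but oscillates by $1$ on every subinterval. So any correct completion must re-use the closedness of $X$; incidentally, your set $F_{c,d}=\{c\le f\le d\}$ need not even be closed --- only the sublevel sets $\{f\le d\}$ are, because they are projections of CBD subsets of $\{(x,a): [a,a+s]\subset X_x\}$ and projections of CBD sets are closed under $\mydc$ by \cite{M}.

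In fact no Baire-type input is needed, which is presumably why the paper simply cites \cite[2.8(2)]{DMS}. A repair along your own lines: let $c_0=\sup\{c\in[-M,M] : \{x\in W: f(x)\le c\}\text{ has empty interior}\}$, which exists by $\mydc$ since this set of $c$'s is definable, nonempty and bounded above (it is downward closed and does not contain $M$). Pick $\delta>0$ with $\delta+\delta<s$. Then $\{x\in W: f(x)\le c_0+\delta\}$ has nonempty interior, so it contains an open box $J$, while $\{x\in W: f(x)\le c_0-\delta\}$ is (relatively) closed with empty interior, so its complement contains a subbox $U\subset J$. On $U$ one has $c_0-\delta<f\le c_0+\delta$, which is exactly the required $U$, $c$, $d$ with $d-c<s$, and your final step then finishes the proof. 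This threshold argument uses only definable completeness and the closedness of the sublevel sets, i.e.\ precisely the hypotheses of the lemma.
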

\begin{proof}
\cite[2.8(2)]{DMS}
\end{proof}

The notion of dimension used for o-minimal open cores in \cite{DMS} is not appropriate for our setting.
We give a new definition of dimension of a $\myds$-set.
The dimension of a set definable in a locally o-minimal structure admitting local definable cell decomposition is defined in \cite[Section 5]{Fuji}.
In a definably complete uniformly locally o-minimal structure of the second kind, the dimension defined below coincides with the dimension defined in \cite[Section 5]{Fuji} by \cite[Corollary 5.3]{Fuji}. 
\begin{definition}[Dimension]
Let $\mathcal R=(R,<,\ldots)$ be an expansion of a densely linearly ordered structure.
Consider a $\myds$-subset $X$ of $R^n$ and a point $x \in R^n$.
The \textit{local dimension $\dim_xX$ of $X$ at $x$} is defined as follows:
\begin{itemize}
\item $\dim_xX=-\infty$ if there exists an open box $B$ with $x \in B$ and $B \cap X=\emptyset$.
\item Otherwise, $\dim_xX$ is the supremum of nonnegative integers $d$ such that, for any open box $B$ with $x \in B$, the image $\pi(B \cap X)$ has a nonempty interior for some coordinate projection $\pi:R^n \rightarrow R^d$.
\end{itemize} 
The \textit{dimension of $X$} is defined by $\dim X = \sup\{\dim_xX\;|\; x \in R^n\}$.
The \textit{projective dimension $\mypd X$ of $X$} is defined as follows:
\begin{itemize}
\item $\mypd X=-\infty$ if $X$ is an empty set.
\item Otherwise, $\mypd X$ is the supremum of nonnegative integers $d$ such that the image $\pi(X)$ has a nonempty interior for some coordinate projection $\pi:R^n \rightarrow R^d$.
\end{itemize} 
\end{definition}

The following lemma illustrates that the dimension and the projective dimension coincides in some open box.
\begin{lemma}\label{lem:dim_lem1}
Let $\mathcal R=(R,<,\ldots)$ be an expansion of a densely linearly ordered structure.
Consider a $\myds$-subset $X$ of $R^n$ of dimension $d$.
Take a point $x \in R^n$ with $\dim_xX=d$.
We have $\dim(X \cap B) = \mypd(X \cap B)=d$ for any sufficiently small open box $B$ in $R^n$ with $x \in B$.
\end{lemma}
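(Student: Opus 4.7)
The plan is to reduce the lemma to a careful unpacking of the definitions of $\dim_xX$, $\dim X$, and $\mypd X$, without invoking any deep property of $\myds$-sets beyond the fact that $X \cap B$ is again $\myds$ whenever $B$ is an open box; this is immediate from Lemma \ref{lem:quo}(2)(3), since open boxes are constructible.

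The starting point is to use the hypothesis $\dim_xX=d$ to produce a witness box. Since $\dim_xX \not\geq d+1$, there exists an open box $B_0$ containing $x$ such that for every coordinate projection $\pi:R^n \to R^{d+1}$, the image $\pi(B_0 \cap X)$ has empty interior. I claim any open box $B$ with $x \in B \subset B_0$ satisfies the conclusion, and I would verify the three required inequalities in turn. For $\mypd(X \cap B) \geq d$, apply $\dim_xX \geq d$ to the box $B$ itself to obtain some coordinate projection $\pi:R^n \to R^d$ with $\pi(B \cap X)$ having nonempty interior. For $\mypd(X \cap B) \leq d$, observe that $\pi(X \cap B) \subset \pi(X \cap B_0)$ for every projection, and by the choice of $B_0$ the latter has empty interior for every projection to $R^{d+1}$; hence so does the former.

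For the dimension equality, I would first establish a monotonicity principle: if $X' \subset X$ are both $\myds$, then $\dim_y X' \leq \dim_y X$ for every $y$. Applying this to $X \cap B \subset X$ yields $\dim(X \cap B) \leq \dim X = d$. For the reverse inequality it suffices to show $\dim_x(X \cap B) \geq d$. Given an arbitrary test box $B'$ containing $x$, note that $B' \cap B$ is again an open box containing $x$; applying $\dim_xX \geq d$ to $B' \cap B$ produces a projection $\pi$ to $R^d$ with $\pi(B' \cap B \cap X) = \pi(B' \cap (X \cap B))$ having nonempty interior, which is exactly what the definition of $\dim_x(X \cap B) \geq d$ demands.

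The main (but mild) obstacle is the monotonicity step: the definition of $\dim_y$ is phrased as a supremum, so one must argue that a witness box forbidding dimension $d+1$ for $X$ automatically forbids it for any $X' \subset X$. This is immediate because $\pi(B'' \cap X') \subset \pi(B'' \cap X)$, and a subset of a set with empty interior still has empty interior. Beyond this definitional care, the argument is routine, and notably no appeal to the $\mydc$ assumption or to the specific structure of the $\myds$-family of $X$ is needed.
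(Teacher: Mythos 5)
Your argument is correct and follows essentially the same route as the paper: both proofs simply unpack the definitions, using a single witness box $B_0$ on which every projection of $X \cap B_0$ to $R^{d+1}$ has empty interior to get $\mypd(X\cap B)\le d$, and the localness/monotonicity of $\dim_x$ to sandwich $\dim(X\cap B)=d$, the only cosmetic difference being that you exhibit a $d$-dimensional projection directly where the paper invokes the general inequality $\dim \le \mypd$. (Your side remark via Lemma \ref{lem:quo} that $X\cap B$ is again $\myds$ is unnecessary --- the definitions of $\dim$ and $\mypd$ apply verbatim --- and, as stated, that lemma assumes $\mydc$, which this lemma does not.)
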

\begin{proof}
Since $\dim_xX=d$, the projection image $\pi(B \cap X)$ has an empty interior for any coordinate projection $\pi:R^n \rightarrow R^{d+1}$ when $B$ is a sufficiently small open box with $x \in B$.
In particular, $\mypd B \cap X \leq d$.
It is obvious that $d = \dim_xX \leq \dim B \cap X \leq \dim X =d$.
We have shown that $\dim B \cap X = d$ and $\mypd B \cap X \leq \dim B \cap X$.
The opposite inequality $\dim B \cap X \leq \mypd B \cap X$ is obvious from the definition.
\end{proof}

\section{Basic lemmas}\label{sec:basic}
We introduce basic lemmas in this section.
We first prove two lemmas.
We can prove the lemmas by localizing the arguments in \cite[2.4]{DMS}.

\begin{lemma}\label{lem:base1}
Let $\mathcal R=(R,<,+,0,\ldots)$ be an expansion of a densely linearly ordered abelian group  with $\mathcal R \models \mydc, \myluft$.

For any definable set $X \subset R^{m+n}$ and a point $(b,a) \in R^m \times R^n$, there exist a positive integer $N$, a closed box $B$ with $a \in \myint(B)$ and an open box $U$ containing the point $b$ such that, if $X_x \cap B$ is discrete, we have $|X_x \cap B | \leq N$ for any $x \in U$.

In addition, we can take $U=R^m$ if $\mathcal R \models \mylufo$.
We can take $B$ independently of $X \subset R^{m+n}$ if $\mathcal R \models \mylufs$.
\end{lemma}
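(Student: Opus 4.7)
The plan is to apply the axiom $\myluft$ in its $(m+n)$-variable form, which is already recorded just after the definition, and then invoke a standard consequence of definable completeness to rule out the infinite-cardinality case.

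First I would apply the multi-variable version of $\myluft$ to the given definable set $X \subset R^{m+n}$ at the point $(b,a)$. This immediately yields a positive integer $N$, a closed box $B$ with $a \in \myint(B)$, and an open box $U$ containing $b$ with the property that for every $x \in U$ and every closed box $B'$ satisfying $a \in \myint(B') \subset B$, either $|X_x \cap B'| = \infty$ or $|X_x \cap B'| \leq N$. Specializing to the legitimate choice $B' = B$, the same dichotomy holds for $|X_x \cap B|$ itself, for every $x \in U$.

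Next, assume $x \in U$ is a point for which $X_x \cap B$ is discrete. Then $X_x \cap B$ is a bounded definable subset of $R^n$ in which every point is isolated. A standard consequence of $\mydc$ is that such a set must be finite: in a definably complete structure a discrete definable set is automatically closed, and a closed bounded discrete definable subset of $R^n$ has only finitely many points (this type of result goes back to Miller's work cited as \cite{M}). Hence $|X_x \cap B|$ is finite, and combined with the dichotomy above we conclude $|X_x \cap B| \leq N$, as required.

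The strengthened hypotheses are handled by the same argument with no change in structure: if $\mathcal R \models \mylufo$ the axiom allows the choice $U = R^m$, and if $\mathcal R \models \mylufs$ the box $B$ can furthermore be chosen independently of $X$. Both refinements pass directly to the conclusion. The main obstacle, and essentially the only place where work is required beyond unwinding $\myluft$, is the finiteness claim of the previous paragraph: it is folklore in the definably complete setting but deserves a careful invocation, because the hypothesis on $X_x \cap B$ is merely discreteness rather than closed-discreteness. The cleanest route is to first record that a discrete definable subset of $R^n$ is automatically closed under $\mydc$, and then that a closed, bounded, discrete definable set has finite cardinality, by iterated extraction of suprema in combination with the definable-sup property.
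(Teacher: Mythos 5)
Your opening move is fine: applying the $(m+n)$-variable form of $\myluft$ to $X$ at $(b,a)$ and specializing to $B'=B$ does give the dichotomy $|X_x\cap B|=\infty$ or $|X_x\cap B|\leq N$ for all $x\in U$. The proof collapses, however, at the step you use to exclude the infinite branch: it is \emph{not} a consequence of $\mydc$ that a discrete definable set is closed, nor that a closed, bounded, discrete definable set is finite. For the first claim, $\{1/n : n\geq 1\}$ is discrete and definable in the definably complete structure $(\mathbb R,<,+,\cdot,\mathbb Z)$ but is not closed. For the second, note that definable completeness is a first-order scheme, hence preserved in elementary extensions; in a proper elementary extension of $(\mathbb R,<,+,\cdot,\mathbb Z)$ pick a nonstandard integer $N_0$ and consider $\{x : x\in\mathbb Z^{*},\ 0\leq x\leq N_0\}$, which is closed, bounded, definable, discrete and infinite. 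Finiteness of CBD discrete sets is precisely the kind of tameness that the LUF-type hypotheses are introduced to supply; if $\mydc$ alone yielded it, most of the machinery of this paper and of \cite{DMS} would be superfluous. So the "folklore" fact you invoke is false in this generality, and with it the whole argument.

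For comparison, the paper's proof confronts a hypothetical infinite discrete fiber directly. It first proves the statement for a set whose fibers $X_x$ are CDD: assuming $X_x\cap B$ is infinite for suitable $x$, it applies $\myluft$ not to $X$ itself but to the auxiliary set $Y=\{(x,y_1,y_2) : (x,y_1)\in X,\ (x,y_2)\in X,\ y_1\geq y_2 \text{ in the lexicographic order}\}$, and then uses Miller's lexicographic minimum (which needs closedness of the fiber --- this is exactly why your worry about "merely discrete" fibers is legitimate, but your proposed fix is unavailable) to extract points $z_1,z_2,\ldots$ of $X_x\cap B$ with $|Y_{(x,z_k)}\cap B|=k$; taking $k=N+1$ produces a fiber whose cardinality is finite yet exceeds $N$, contradicting the $\myluft$ dichotomy. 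The general case is then reduced to CDD fibers via the definable family $D_{(s,x)}$ of $s$-isolated points of $X_x$, a second application of $\myluft$, and monotonicity in $s$. Some argument of this shape is unavoidable; the finiteness you need cannot be imported from $\mydc$.
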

\begin{proof}
We first demonstrate the following claim:
\medskip

\textbf{Claim.} Let $X$ be a definable subset of $R^{m+n}$.
Assume that the fiber $X_x$ is CDD for any $x \in R^m$.
For any $(b,a) \in R^m \times R^n$, there exists a closed box $B$ with $a \in \myint(B)$ and an open box $U$ containing the point $b$ such that $X_x \cap B$ is a finite set for any  $x \in U$.
In addition, we can take $U=R^m$ if $\mathcal R \models \mylufo$.
We can take $B$ independently of $X \subset R^{m+n}$ if $\mathcal R \models \mylufs$.
\medskip

We prove the claim. 
Assume the contrary.
We can find a point $(b,a) \in R^m \times R^n$ such that, for any closed box $B$ with $a \in \myint(B)$ and any open box $U$ with $b \in U$, $X_x \cap B$ is infinite for some $x \in U$.
Consider the set
\begin{align*}
Y = \{(x,y_1,y_2) &\in R^m \times R^n \times R^n\;|\; (x,y_1) \in X,\ (x,y_2) \in X,\  \\
&y_1 \geq y_2 \text{ in the lexicographic order}\}\text{.}
\end{align*}
Since $\mathcal R \models \myluft$, there exist a positive integer $N$, a closed box $B$ with $a \in \myint(B)$ and an open box $V$ with $(b,a) \in V=U_1 \times U_2 \subset R^m \times R^n$ such that, for any $(x,y_1) \in V$, we have $|Y_{(x,y_1)} \cap B|=\infty$ or $|Y_{(x,y_1)} \cap B| \leq N$.
Shrinking $B$ if necessary, we may assume that $B$ is contained in $U_2$.
Fix such a closed box $B$.
Take a point $x \in U_1$ such that $X_x \cap B$ is infinite.
Such a point $x$ exists by the assumption.
Note that $X_x \cap B$ is CBD.
We construct a point $z_k \in B$ with $|Y_{(x,z_k)} \cap B| = k$ inductively.
Take $z_1 = \operatorname{\mathbf{lexmin}}(X_x \cap B)$, then $Y_{(x,z_1)} \cap B=\{z_1\}$.
The notation $\operatorname{\mathbf{lexmin}}$ denotes the lexicographic minimum defined in \cite{M}.
Take $z_2 = \operatorname{\mathbf{lexmin}}(X_x \cap B \setminus \{z_1\})$, then $Y_{(x,z_2)} \cap B=\{z_1,z_2\}$.
Take $z_3, z_4, \ldots$ in this manner.
We have $|Y_{(x,z_{N+1})} \cap B|=N+1$, which is a contradiction.

It is obvious that $U=R^m$ if $\mathcal R \models \mylufo$ and $B$ is common to all $X$ if  $\mathcal R \models \mylufs$.
We have finished the proof of the claim.
\medskip

We return to the proof of the lemma.
Take a bounded closed box $C$ with $b \in \myint(C)$. 
Set $D=\{(s,x,y) \in  R \times C \times R^n \;|\; \prod_{i=1}^n[y_i-s,y_i+s]^n \cap X_x = \{y\}\}$, where $y=(y_1, \ldots, y_n)$.
The following assertions are trivial.
\begin{itemize}
\item The fiber $D_{(s,x)}$ is CDD.
\item $\bigcup_{s>0}D_{s}=\{(x,y)\;|\;y \text{ is a discrete point in }X_x\}$.
\end{itemize}
Apply the claim to the set $D$.
We can take an open box $U$ with $b \in U$ and a closed box $B$ with $a \in \myint(B)$ such that $D_{(s,x)} \cap B$ are finite sets for all $x \in U$ and all sufficiently small $s>0$.
Since $\mathcal R \models \myluft$, shrinking $B$ and $U$ if necessary, we have $\left|D_{(s,x)} \cap B \right| \leq N$ for some positive integer $N$, any $x \in U$ and any sufficiently small $s>0$.
Since $D_{(s,x)} \subset D_{(s',x)}$ for all $s>s'>0$, we have $\left|\bigcup_{s>0}D_{(s,x)}  \cap B\right| \leq N$.
The `in addition' parts of the lemma are obvious.
\end{proof}

\begin{lemma}\label{lem:base2}
Let $\mathcal R=(R,<,+,0,\ldots)$ be an expansion of a densely linearly ordered abelian group  with $\mathcal R \models \mydc, \myluft$.

For any definable set $X \subset R^{m+1}$ and a point $(b,a) \in R^m \times R$, there exist positive integers $N_1$, $N_2$, an open interval $I$ with $a \in I$ and an open box $U$ containing the point $b$ such that, for any $x \in U$, 
\begin{itemize}
\item the open set $\myint(X_x) \cap I$ is the union of at most $N_1$ open intervals in $I$;
\item the closed set $\overline{X_x} \cap \overline{I}$ is the union of at most $N_1$ points and $N_2$ closed intervals in $\overline{I}$.
\end{itemize}

In addition, we can take $U=R^m$ if $\mathcal R \models \mylufo$.
We can take $I$ independently of $X \subset R^{m+1}$ if $\mathcal R \models \mylufs$.
\end{lemma}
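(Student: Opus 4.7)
The plan is to adapt the discreteness-plus-monotonicity strategy of Lemma \ref{lem:base1} to auxiliary definable families that pick out endpoints and isolated points. Introduce
\begin{align*}
D^L &= \{(s,x,y)\in R_{>0}\times R^m\times R\mid (y,y+s)\subset \myint(X_x),\ y\notin \myint(X_x)\},\\
F^L &= \{(s,x,y)\in R_{>0}\times R^m\times R\mid y\in \overline{X_x},\ (y-s,y)\cap\overline{X_x}=\emptyset\},\\
E &= \{(s,x,y)\in R_{>0}\times R^m\times R\mid y\in\overline{X_x},\ (y-s,y+s)\cap\overline{X_x}=\{y\}\},
\end{align*}
together with the right-handed analogues $D^R$ and $F^R$. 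In each case the fiber over $(s,x)$ is discrete, since two distinct points of the fiber must differ by at least $s$ by the defining condition; and the family is antitone in $s$, so the union over $s>0$ recovers, respectively, the left endpoints of the open-interval components of $\myint(X_x)$, the left endpoints of the connected components of $\overline{X_x}$, and the isolated points of $\overline{X_x}$.

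I would then apply Lemma \ref{lem:base1} to each of these five sets, viewing $(s,x)\in R^{m+1}$ as the parameter and $y\in R$ as the variable. Discreteness of the fibers yields a bound $|Y_{(s,x)}\cap B|\leq N$ on a closed box $B$ with $a\in\myint(B)$, uniform for $(s,x)$ in an open neighborhood of $(s_0,b)$ for some $s_0>0$, and the monotonicity argument that closes the proof of Lemma \ref{lem:base1} then lifts this to a bound on $|\bigcup_{s>0}Y_{(s,x)}\cap B|$ uniform for $x$ in an open box $U\ni b$. Choose an open interval $I$ with $\overline I$ inside the intersection of the five closed boxes and $U$ inside the intersection of the five open boxes. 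For any $x\in U$, each component of $\myint(X_x)\cap I$ either has its left endpoint in $I$, hence in $\bigcup_{s>0}D^L_{(s,x)}\cap\overline I$, or extends past $\inf I$; this bounds the number of components by one more than the $D^L$-count and yields the first bullet. For the second bullet, the components of $\overline{X_x}\cap\overline I$ are bounded analogously via $F^L$, and the singleton components among them are precisely those captured by $E$; the singletons give the point count $N_1$, and the remaining non-degenerate components give the interval count $N_2$.

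The hard part is the propagation of the $(s,x)$-bound from a single $s_0>0$ to all small $s>0$, which hinges on the monotonicity of the auxiliary families combined with the $\myluft$ hypothesis already internalized in Lemma \ref{lem:base1}. The ``in addition'' clauses transfer directly: under $\mathcal R\models\mylufo$ the parameter box provided by Lemma \ref{lem:base1} can be taken to be all of $R^{m+1}$, yielding $U=R^m$; and under $\mathcal R\models\mylufs$ the closed box supplied by Lemma \ref{lem:base1} is independent of its input, so $I$ may be chosen independently of $X$.
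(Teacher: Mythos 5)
Your overall strategy (auxiliary definable families whose fibers are $s$-separated, hence discrete, then Lemma \ref{lem:base1} plus monotone union) is the same one the paper uses, but the step you yourself call ``the hard part'' is not closed, and as written it would fail. You apply Lemma \ref{lem:base1} to get a bound that is ``uniform for $(s,x)$ in an open neighborhood of $(s_0,b)$ for some $s_0>0$.'' Since your families are antitone in $s$, the fibers $Y_{(s,x)}$ \emph{grow} as $s\to 0^+$, while $\bigcup_{s>0}Y_{(s,x)}=\bigcup_{0<s<\varepsilon}Y_{(s,x)}$ for every $\varepsilon>0$; so a bound valid only for $s$ near a fixed positive $s_0$ (which by monotonicity yields bounds only for $s\geq s_0-\delta$) says nothing about the union. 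Monotonicity cannot propagate the bound downward; you must obtain the bound for \emph{all sufficiently small} $s>0$ at once. The repair is exactly what the paper does: apply Lemma \ref{lem:base1} (equivalently, the $\myluft$ condition) at the parameter point $(0,b)\in R^{1+m}$, so that the resulting open box in the parameter space contains $(0,\delta)\times U$; only then does the increasing-union argument give $\bigl|\bigcup_{s>0}Y_{(s,x)}\cap B\bigr|\leq N$ for all $x\in U$. Under $\mylufo$ the issue disappears (the parameter box is all of $R^{1+m}$), but in the general $\myluft$ case your choice of base point is where the proof breaks.

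Two further remarks. First, the paper avoids your five families: it uses a single set $C$ of midpoints of the components of $X_x\cap(a-r,a+r)$ (with the shrinking radius $r$ as the extra parameter making the fibers discrete) to bound the interior, and then obtains the closure bullet for free by applying the interior statement to $R^{m+1}\setminus X$, since $\overline{X_x}\cap\overline I=\overline I\setminus\bigl(\myint(R\setminus X_x)\cap I\bigr)$. Second, your direct treatment of the closure via $F^L$ is slightly off: the left endpoint of a component of $\overline{X_x}\cap\overline I$ need not lie in $\bigcup_{s>0}F^L_{(s,x)}$, because other components of $\overline{X_x}$ may accumulate at it from the left. The count can be rescued by bounding instead the complementary gaps: each bounded component $(u,v)$ of $\overline I\setminus\overline{X_x}$ has its right endpoint $v$ in $\bigcup_{s>0}F^L_{(s,x)}$, and the number of components of $\overline{X_x}\cap\overline I$ is at most the number of gaps plus one --- but this is essentially the complementation argument the paper uses, and it is cleaner to invoke it directly.
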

\begin{proof}
The assertion on the closure follows the assertion on the interior by considering $R^{m+1} \setminus X$ in place of $X$.
We only prove the latter.
We may assume that $X_x$ is open for any $x \in R^m$ without loss of generality.
Consider the set
\begin{align*}
C=\{(r,x,y) \in R \times R^m \times R\;|\; &r>0, \exists \varepsilon>0,\ (y-\varepsilon, y+\varepsilon) \subset X_x \cap (a-r, a+r)\\
& y-\varepsilon, y+\varepsilon \in \partial (X_x \cap (a-r, a+r))\}\text{,}
\end{align*}
where $\partial (X_x \cap (a-r, a+r))$ denotes the boundary of the set $X_x \cap (a-r, a+r)$ in $R$.
The fiber $C_{(r,x)}$ is discrete.
By Lemma \ref{lem:base1}, there exist a positive integer $N$, a positive element $s>0$, a closed interval $I'$ with $a \in \myint(I')$ and an open box $U$ containing the point $b$ such that $|C_{(r,x)} \cap I'| \leq N$ for all $x \in U$ and $0<r<s$.
Take a sufficiently small $r>0$ with $[a-r,a+r] \subset I'$ and set $I=(a-r,a+r)$.
The definable set $X_x \cap I$ consists of at most $N$ open intervals for any $x \in U$.
\end{proof}

Lemma \ref{lem:bb1} is the key lemma introduced in Section \ref{sec:intro}.
We prove the lemma combining the arguments of \cite{DMS} and \cite{Fuji3}.
Lemma \ref{lem:bb2} is another key lemma corresponding to \cite[Theorem 3.3]{Fuji}.
They are proved simultaneously.
\begin{lemma}\label{lem:bb1}
Let $\mathcal R=(R,<,+,0,\ldots)$ be an expansion of a densely linearly ordered abelian group  with $\mathcal R \models \mydc, \myluft$.
Let $\{X_{r,s} \subset R^n\}_{r>0,s>0}$ be a $\myds$-family. 
Set $X=\bigcup_{r,s}X_{r,s}$.
One of the following conditions is satisfied:
\begin{itemize}
\item The $\myds$-set $X$ has an empty interior and it is locally finite when $n=1$. 
\item The CBD set $X_{r,s}$ has a nonempty interior for some $r>0$ and $s>0$.
\end{itemize}
\end{lemma}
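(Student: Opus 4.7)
My approach is to prove the contrapositive by induction on $n$: assuming no $X_{r,s}$ has nonempty interior, I derive that $X$ has empty interior, and, when $n=1$, that $X$ is also locally finite. I would first collapse the two parameters to one by setting $Y_t:=X_{t,1/t}$ for $t>0$. The $\myds$-axioms make $\{Y_t\}_{t>0}$ a monotone increasing definable family of CBD sets whose union is $X$, and each $Y_t$ still has empty interior, so all subsequent work can be carried out on a single-parameter monotone family.

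For the base case $n=1$, I would apply Lemma \ref{lem:base2} to the definable set $W:=\{(t,x)\in R^2:x\in Y_t\}$ at an arbitrary point $(t_0,a)$. Since each $Y_t$ has empty interior, the ``closed interval'' contribution in the closure description of that lemma is absent, yielding a positive integer $N$, a closed interval $I$ with $a\in\myint(I)$ and a neighbourhood $U$ of $t_0$ such that $|Y_t\cap\overline I|\le N$ for every $t\in U$. If $X$ were to accumulate at some $a$, pick a sequence $x_k\to a$ with $x_k\in Y_{t_k}$ and $x_k\ne a$. Either the $t_k$ stay bounded, in which case they accumulate at some $t^*$ and monotonicity puts the whole sequence inside $Y_{t^*}$, contradicting the local finiteness of $Y_{t^*}$; or $t_k\to\infty$, which the same lemma rules out after the reparametrization $u=1/t$ at $(0,a)$. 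Hence $X$ is locally finite, and in particular has empty interior.

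For the inductive step, I would reduce dimension via Lemma \ref{lem:interior0}. For each $u>0$ set
\[
A^u_{r,s}:=\{x\in R^{n-1}:(X_{r,s})_x\text{ contains a closed interval of length }u\},
\]
where $(X_{r,s})_x$ is the fibre over $x$ under the projection forgetting the last coordinate. Monotonicity of $\{X_{r,s}\}$ immediately makes $\{A^u_{r,s}\}_{r,s}$ another $\myds$-family in $R^{n-1}$. By Lemma \ref{lem:interior0}, if any $A^u_{r,s}$ had nonempty interior then so would the corresponding $X_{r,s}$, contradicting the standing hypothesis; the inductive hypothesis therefore forces $\bigcup_{r,s}A^u_{r,s}$ to have empty interior for every $u>0$. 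Conversely, if $X$ itself had nonempty interior, applying Lemma \ref{lem:interior0} to a CBD piece of $X$ would produce some $u>0$ and an open box $V\subset R^{n-1}$ such that every $X_x$ with $x\in V$ contains a closed interval of length $u$.

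The main obstacle is that this ``fat slice'' of $X$ is a priori strictly larger than $\bigcup_{r,s}A^u_{r,s}$: a fibre $X_x$ may contain a length-$u$ interval that is covered by the monotone union of the $X_{r,s}$-fibres without being contained in any single one. To pin a single pair $(r,s)$ on an open sub-box of $V$, I would combine $\myluft$ with Lemma \ref{lem:base2} applied in the last coordinate to bound, uniformly in $x\in V$ and in $(r,s)$, the number of maximal open subintervals of the fibre $(X_{r,s})_x$, then loosen the scale from $u$ to a strictly smaller $u'>0$ and use the monotone covering together with definable completeness to isolate a pair $(r,s)$ making $A^{u'}_{r,s}$ have nonempty interior, yielding the desired contradiction. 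This is the step at which the strategies of \cite{DMS} and \cite{Fuji3} must be combined, as the authors announce.
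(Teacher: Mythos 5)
There are two genuine gaps. First, your reduction to one parameter via $Y_t:=X_{t,1/t}$ (and likewise the ``reparametrization $u=1/t$ at $(0,a)$'' used to handle $t_k\to\infty$) is not available: the ambient structure is only an expansion of a densely ordered abelian group, so there is no multiplicative inverse, and in general no definable decreasing map from a cofinal subset of $(0,\infty)$ into a coinitial subset of $(0,\infty)$ (already in a pure divisible ordered abelian group, definable unary functions are piecewise affine with finitely many pieces, so none exists). This is not a cosmetic point: $\myluft$ and Lemma \ref{lem:base2} only give bounds that are \emph{locally} uniform in the parameter, so the behaviour of the family as the first parameter $r$ tends to infinity cannot be converted into a statement at a point of $R$. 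The paper's $n=1$ proof is organized around exactly this obstacle: for fixed $r$ it uses Lemma \ref{lem:base2} plus monotonicity in $s$ to see that $X_r=\bigcup_{s>0}X_{r,s}$ is CDD, and then handles the unbounded union over $r$ by a separate definable-completeness argument, studying the definable function $f(r)=\inf\{x>a\mid x\in X_r\}$ and applying Lemma \ref{lem:base2} to $\operatorname{Im}(f)$ to contradict $\lim_{r\to\infty}f(r)=a$. Your bounded case also needs minor repairs (take an upper bound of the parameters rather than an accumulation point, and avoid countable sequences, which need not detect accumulation in a non-archimedean order), but the unbounded case is simply missing once $1/t$ is removed.

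Second, in the inductive step you correctly identify the crux --- a fibre $X_x$ may contain a length-$u$ interval covered by the monotone union $\bigcup_{r,s}(X_{r,s})_x$ of \emph{closed} sets without lying in any single $(X_{r,s})_x$, and even when it does, the pair $(r,s)$ and the length depend on $x$ and must be made uniform on a set with nonempty interior in the base --- but your resolution (``loosen the scale to $u'$ and use the monotone covering together with definable completeness to isolate a pair $(r,s)$'') is a restatement of the difficulty rather than an argument; no compactness-type covering argument applies to a monotone family of closed sets. In the paper this is precisely where Lemma \ref{lem:bb1} is proved \emph{simultaneously} with Lemma \ref{lem:bb2} by induction on $n$: bb1 in dimension $n$ is obtained via the arguments of \cite{Fuji3} using bb2 in dimension $n-1$, and bb2 in dimension $n$ is then deduced from bb1 together with Lemma \ref{lem:interior0}. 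A proof of bb1 alone by induction, as you propose, would have to reconstruct this auxiliary statement (or an equivalent uniformization device) and your sketch does not do so.
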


\begin{lemma}\label{lem:bb2}
Let $\mathcal R=(R,<,+,0,\ldots)$ be an expansion of a densely linearly ordered abelian group  with $\mathcal R \models \mydc, \myluft$.
Consider a $\myds$-subset $X$ of $R^n$ with a nonempty interior.
Let $X=X_1 \cup X_2$ be a partition into two $\myds$-sets.
At least one of $X_1$ and $X_2$ has a nonempty interior.  
\end{lemma}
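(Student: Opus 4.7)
The plan is to derive Lemma \ref{lem:bb2} directly from Lemma \ref{lem:bb1} together with an elementary topological observation, with no induction on $n$ required.

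First I would unpack the $\myds$-hypothesis: write $X_1 = \bigcup_{r,s>0} X_{1,r,s}$ and $X_2 = \bigcup_{r,s>0} X_{2,r,s}$ for $\myds$-families of CBD sets. Setting $Y_{r,s} := X_{1,r,s} \cup X_{2,r,s}$, I would check that $\{Y_{r,s}\}_{r,s>0}$ is again a $\myds$-family whose union is $X$: each $Y_{r,s}$ is CBD as a finite union of CBD sets, and the inclusions $X_{i,r,s} \subset X_{i,r',s}$ and $X_{i,r,s'} \subset X_{i,r,s}$ for $r \leq r'$, $s \leq s'$ are inherited by $Y_{r,s}$.

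Next, since $X$ has nonempty interior by assumption, the first alternative of Lemma \ref{lem:bb1} is ruled out, so the second one furnishes $r,s>0$ with $\myint(Y_{r,s}) \neq \emptyset$. I would then invoke the standard fact that if $A, B$ are closed subsets of a topological space and $A \cup B$ has nonempty interior, then at least one of $A, B$ has nonempty interior: indeed, if $U$ is a nonempty open set contained in $A \cup B$, then $U \setminus A \subset B$ is open (since $A$ is closed), so either $U \setminus A = \emptyset$, giving $U \subset A$ and $\myint(A) \neq \emptyset$, or $U \setminus A$ is a nonempty open subset of $B$, giving $\myint(B) \neq \emptyset$. Applied to $A = X_{1,r,s}$ and $B = X_{2,r,s}$, this forces at least one of these CBD sets to have nonempty interior, and hence so does the corresponding $X_i$.

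The principal obstacle is entirely absorbed into Lemma \ref{lem:bb1}, which is explicitly flagged by the authors as the key lemma and is proved simultaneously with \ref{lem:bb2}. Given \ref{lem:bb1}, the present lemma is essentially formal; the only mildly subtle point is verifying that the combined family $\{Y_{r,s}\}$ satisfies the monotonicity and CBD conditions required of a $\myds$-family, both of which are immediate from Lemma \ref{lem:quo}(2) and the definition.
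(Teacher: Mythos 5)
Your proof is correct, and it is genuinely shorter than the paper's. The opening move coincides with the paper's: the paper also sets $X_{r,s}=X^1_{r,s}\cup X^2_{r,s}$, observes that this is a $\myds$-family with union $X$, and applies Lemma \ref{lem:bb1} to obtain $\myint(X_{r,s})\neq\emptyset$ for some $r,s>0$. At that point, however, the paper only records the easy direction (if one of $X^1_{r,s},X^2_{r,s}$ has nonempty interior we are done) and devotes the rest of the argument to the remaining case: it reduces to $X_1,X_2$ being CBD with $X$ a closed box, and runs a fibrewise argument through the projection forgetting the last coordinate, invoking Lemma \ref{lem:bb2} for $n=1$, the induction hypothesis in dimension $n-1$, Lemma \ref{lem:bb1} in dimension $n-1$, and Lemma \ref{lem:interior0}. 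Your elementary observation---that if $A,B$ are closed and $A\cup B$ contains a nonempty open set $U$, then either $U\subset A$ or $U\setminus A$ is a nonempty open subset of $B$---makes that remaining case vacuous, since CBD sets are closed; so the entire second half of the paper's proof is bypassed. The one caveat concerns your remark that no induction on $n$ is required: that is accurate for the implication Lemma \ref{lem:bb1} $\Rightarrow$ Lemma \ref{lem:bb2}, but Lemma \ref{lem:bb1} is itself proved by induction on $n$ using Lemma \ref{lem:bb2} in dimension $n-1$, so Lemma \ref{lem:bb2} remains inside the induction loop; your derivation slots in without circularity, since at each $n$ one establishes Lemma \ref{lem:bb1} first and then deduces Lemma \ref{lem:bb2} from it. In exchange for restructuring nothing else, your route isolates all the real content in Lemma \ref{lem:bb1}, whereas the paper's fibrewise argument performs work that, for this lemma, the topological fact renders unnecessary.
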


\begin{proof}
We prove the lemmas by the induction on $n$.
We first consider the case in which $n=1$.
We first demonstrate Lemma \ref{lem:bb1}.
Assume that $\myint(X_{r,s})=\emptyset$ for all $r>0$ and $s>0$.
We have only to show that $X$ is locally finite. 
Fix an arbitrary point $a \in R$ and $r>0$.
Set $X_r=\bigcup_{s>0}X_{r,s}$.
There exist a closed interval $I$ with $a \in \myint(I)$, a positive element $t \in R$, positive integers $N_1$ and $N_2$ such that the intersection $I \cap X_{r,s}$ is the union of at most $N_1$ points and $N_2$ closed intervals for any $0<s<t$ by Lemma \ref{lem:base2}. 
The set $I \cap X_{r,s}$ consists of at most $N_1$ points because $\myint(X_{r,s})=\emptyset$.
We have $|X_r \cap I| \leq N_1$ because $\{X_{r,s}\}_{s>0}$ is a decreasing sequence.
The set $X_r$ is a CDD set.

We show that $X=\bigcup_{r>0}X_r$ is a CDD set.
Assume the contrary.
There exists a point $a \in R$ such that, for any open interval $I$ containing a point $a$, the definable set $I \cap X$ is an infinite set.
We may assume that $\{x > a \} \cap I \cap X$ is an infinite set for any open interval $I$ containing the point $a$  without loss of generality.

Consider the definable function $f:\{r \in R\;|\;r >0\} \rightarrow \{x \in R\;|\; x  > a\}$ defined by $f(r)= \inf \{x >a \;|\; x \in X_r\}$.
As in the proof of \cite[Theorem 4.3]{Fuji3}, we can prove the following assertions:
\begin{itemize}
\item The definable function $f$ is a decreasing function and $\lim_{r \to \infty}f(r)=a$. 
\item Consider the image $\operatorname{Im}(f)$ of the function $f$.
For any $b \in \operatorname{Im}(f)$, there exists a point $b_1 \in \operatorname{Im}(f)$ such that $b<b_1$ and the open interval $(b,b_1)$ has an empty intersection with $\operatorname{Im}(f)$.
\end{itemize}
We may assume that the intersection $\overline{\operatorname{Im}(f)} \cap \overline{I}$ of the closure of the image with $\overline{I}$ consists of finite points and finite closed intervals by Lemma \ref{lem:base2} shrinking the open interval $I$ if necessary.
We lead to a contradiction assuming that it contains a closed interval $J$.
Take an arbitrary point $b \in \operatorname{Im}(f)$ in the interior of the closed interval $J$.
The open interval $(b,b_1)$ has an empty intersection with $\operatorname{Im}(f)$ for some $b_1 \in R$.
It is a contradiction.
We have shown that $I \cap \operatorname{Im}(f)$ is a finite set.
It is a contradiction to the fact that $\lim_{r \to \infty}f(r)=a$. 
We have shown that $X=\bigcup_{r>0}X_r$ is a CDD set.
It is obviously locally finite.
We have demonstrated Lemma \ref{lem:bb1} when $n=1$.

Lemma \ref{lem:bb2} is immediate from Lemma \ref{lem:bb1} when $n=1$.

We next consider the case in which $n>1$.
We first demonstrate Lemma \ref{lem:bb1}.
We can prove that $X_r=\bigcup_{s>0}X_{r,s}$ has an empty interior if $X_{r,s}$ have empty interiors for all $s>0$ in the same way as \cite[Lemma 3.3]{Fuji3}.
Now we can get Lemma \ref{lem:bb1} in the same way as the proof of \cite[Lemma 4.1, Lemma 4.2]{Fuji3} using Lemma \ref{lem:bb2} for $n-1$ instead of \cite[Theorem 3.3]{Fuji}. 

The remaining task is to prove Lemma \ref{lem:bb2} when $n>1$.
Take $\myds$-families $\{X_{r,s}^i\}_{r,s}$ with $X_i = \bigcup_{r,s}X_{r,s}^i$ for $i=1,2$.
Set $X_{r,s}=X_{r,s}^1 \cup X_{r,s}^2$.
It is a CBD set.
We have $\myint(X_{r,s}) \not= \emptyset$ for some $r>0$ and $s>0$ by Lemma \ref{lem:bb1} for $n$ because $X=\bigcup_{r,s} X_{r,s}$.
If at least one of $X_{r,s}^1$ and $X_{r,s}^2$ has a nonempty interior, at least one of $X_1$ and $X_2$ has a nonempty interior.
Therefore, we may assume that $X_1$ and $X_2$ are CBD sets.
Let $B$ be a closed box contained in $X$.
We have $B=(X_1 \cap B) \cup (X_2 \cap B)$.
If the lemma is true for $B$, the lemma is also true for the original $X$.
Hence, we may assume that $X$ is a closed box.

Let $\pi$ be the coordinate projection forgetting the last coordinate.
For $i=1,2$ and $s>0$, we set
\begin{align*}
S_s^i=\{x \in \pi(X_i)\;|\;\text{ the fiber } (X_i)_x \text{ contains a closed interval of length }s\}
\end{align*}
They are CBD sets.
Set $T_i = \bigcup_{s>0} S_s^i$, which is $\myds$.
It is obvious that $T_i=\{x \in \pi(X_i)\;|\;  (X_i)_x \text{ contains an open interval}\}$.
Since $X$ is a closed box, we have $\pi(X) = T_1 \cup T_2$ by Lemma \ref{lem:bb2} for $n=1$.
At least one of $T_1$ and $T_2$ has a nonempty interior by the induction hypothesis.
We may assume that $\myint(T_1) \not=\emptyset$ without loss of generality.
We have $\myint(S_s^1)$ for some $s>0$ by Lemma \ref{lem:bb1} for $n-1$.
The CBD set $X_1$ has a non-empty interior by Lemma \ref{lem:interior0}.
We have finished the proof of Lemma \ref{lem:bb2}.
\end{proof}

We finally demonstrate that a definable map whose graph is $\myds$ is continuous on a dense set.
\begin{lemma}\label{lem:cont_function}
Let $\mathcal R=(R,<,+,0,\ldots)$ be an expansion of a densely linearly ordered abelian group  with $\mathcal R \models \mydc,\myluft$.
Consider a definable map $f:U \rightarrow R^n$ defined on an open set $U$ whose graph is a $\myds$-set.
There exists a nonempty open box $V$ contained in $U$ such that the restriction of $f$ to $V$ is continuous. 
\end{lemma}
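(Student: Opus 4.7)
The plan is to reduce to $n=1$ and then combine a localization via Lemma \ref{lem:bb1} with a definably complete version of ``closed graph into a bounded target is continuous.'' For the reduction, continuity of $f=(f_1,\dots,f_n)$ is equivalent to the continuity of each $f_i$, whose graph is a coordinate projection of the graph of $f$ and so is $\myds$ by Lemma \ref{lem:quo}(1). Given the case $n=1$, one iterates: find an open box $V_1\subset U$ on which $f_1$ is continuous, then apply the case $n=1$ to $f_2|_{V_1}$, whose graph is the intersection of the $\myds$-graph of $f_2$ with $V_1\times R$---an open, hence constructible, hence $\myds$ set by Lemma \ref{lem:quo}(3)---and is therefore $\myds$ by Lemma \ref{lem:quo}(2); continue through all components.

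Assume $n=1$. Fix a closed box $B\subset U$ with nonempty interior and a $\myds$-family $\{G_{r,s}\}_{r,s>0}$ with $G=\bigcup_{r,s>0}G_{r,s}$. Letting $\pi$ denote the projection off the last coordinate, set
\[
X_{r,s}:=\pi\bigl(G_{r,s}\cap(B\times[-1/s,1/s])\bigr).
\]
The monotonicity of $\{G_{r,s}\}$ shows that $\{X_{r,s}\}$ is a $\myds$-family, and $\bigcup_{r,s>0}X_{r,s}=B$ because for any $x\in B$ one picks $s$ so small that $|f(x)|\leq 1/s$ and $r$ so large that $(x,f(x))\in G_{r,s}$. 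Since $B$ has nonempty interior, Lemma \ref{lem:bb1} furnishes $(r_0,s_0)$ with $\myint(X_{r_0,s_0})\neq\emptyset$. Choose an open box $V$ with $\overline V\subset\myint(X_{r_0,s_0})$. For each $x\in\overline V$ there exists $y\in[-1/s_0,1/s_0]$ with $(x,y)\in G_{r_0,s_0}\subset G$, and because $G$ is a function graph this forces $y=f(x)$. Hence $|f|\leq 1/s_0$ on $\overline V$, and the graph $H:=G_{r_0,s_0}\cap(\overline V\times R)$ of $f|_{\overline V}$ is CBD.

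It remains to show $f$ is continuous on $V$. Fix $x_0\in V$; for all sufficiently small $\delta>0$ the open box $W_\delta$ of halfwidth $\delta$ about $x_0$ lies in $V$, and by $\mathcal R\models\mydc$ the quantities $\psi(\delta):=\sup f(W_\delta)$ and $\psi'(\delta):=\inf f(W_\delta)$ exist. Set $L:=\inf_{\delta>0}\psi(\delta)\geq f(x_0)$. For any $\eta>0$ a sufficiently small $\delta$ gives $\psi(\delta)\in[L,L+\eta)$, and the defining property of $\sup$ produces $x\in W_\delta$ with $f(x)\in(L-\eta,L+\eta)$; thus every open box around $(x_0,L)$ meets $H$, so $(x_0,L)\in\overline H=H$, and because $H$ is the graph of $f$, this forces $L=f(x_0)$. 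A symmetric argument for $\psi'$ gives $\sup_{\delta>0}\psi'(\delta)=f(x_0)$, so for every $\eta>0$ some $\delta>0$ sandwiches $f(W_\delta)\subset(f(x_0)-\eta,f(x_0)+\eta)$, proving continuity at $x_0$.

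The main obstacle is constructing the $\myds$-family $\{X_{r,s}\}$ so that the nonempty-interior conclusion of Lemma \ref{lem:bb1} simultaneously delivers a box on which $f$ is bounded with CBD graph; once this packaging is in place, the concluding continuity step is the familiar ``closed graph into a compact target'' fact, rephrased via $\sup$ and $\inf$ over shrinking open boxes thanks to definable completeness.
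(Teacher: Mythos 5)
Your argument is correct in substance, and since the paper omits its own proof of this lemma (it only points to the preprint cited as [Fuji3] and says that Lemma \ref{lem:bb1} is to be used in place of the lemma invoked there), your write-up functions as a legitimate self-contained proof resting on the same key ingredient: reduce to $n=1$ by projecting the graph onto each target coordinate (Lemma \ref{lem:quo}), use a $\myds$-family extracted from the graph together with Lemma \ref{lem:bb1} to produce a box over which the graph of the restriction is CBD, and then deduce continuity from the closed-and-bounded-graph property via $\sup$/$\inf$ and $\mydc$. Both the componentwise reduction and the final continuity argument are sound.

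One small repair is needed: the truncation $[-1/s,1/s]$ is not available, because $\mathcal R$ is only an expansion of an ordered abelian group and $1/s$ is not definable, so $X_{r,s}=\pi\bigl(G_{r,s}\cap(B\times[-1/s,1/s])\bigr)$ is not literally a parameterized family of definable sets. Truncate by the first parameter instead, e.g. $X_{r,s}=\pi\bigl(G_{r,s}\cap(B\times[-r,r])\bigr)$: this satisfies the monotonicity required of a $\myds$-family, its members are CBD (projections of CBD sets are CBD under $\mydc$, the same fact underlying Lemma \ref{lem:quo}(1)), and its union is still $B$, since for $x\in B$ one picks $(r_0,s_0)$ with $(x,f(x))\in G_{r_0,s_0}$ and then enlarges $r_0$ so that $f(x)\in[-r_0,r_0]$. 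With $r_0$ replacing $1/s_0$ as the bound on $f$ over $\overline V$, the rest of your proof goes through unchanged.
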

\begin{proof}
We can prove the lemma in the same way as \cite[Lemma 5.1]{Fuji3}.
We use Lemma \ref{lem:bb1} instead of \cite[Lemma 3.4]{Fuji3}.
We omit the proof.
\end{proof}

\section{Uniformly locally o-minimal open core}\label{sec:open_core}
We demonstrate Theorem \ref{thm:main} in this section.
The following lemma claims that all $\myds$-subsets of $R$ are constructible.
\begin{lemma}\label{lem:dim1}
Let $\mathcal R=(R,<,+,0,\ldots)$ be an expansion of a densely linearly ordered abelian group  with $\mathcal R \models \mydc, \myluft$.
Consider a definable subset $X$ of $R$.
The followings are equivalent:
\begin{enumerate}[(1)]
\item The set $X$ is a $\myds$-set.
\item For any $x \in R$, there exists an open interval $I$ such that $x \in I$ and $X \cap I$ is a finite union of points and open intervals.
\item The set $X$ is constructible. 
\end{enumerate}
\end{lemma}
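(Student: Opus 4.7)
My plan is to establish the cycle of implications $(3) \Rightarrow (1) \Rightarrow (2) \Rightarrow (3)$. The implication $(3) \Rightarrow (1)$ is immediate from Lemma \ref{lem:quo}(3).

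For $(1) \Rightarrow (2)$, the strategy is to split $X$ into its interior and a locally finite remainder. Set $Y := X \setminus \myint(X)$. The open set $\myint(X)$ has a closed complement, which is constructible and therefore $\myds$ by Lemma \ref{lem:quo}(3); hence $Y = X \cap (R \setminus \myint(X))$ is $\myds$ by Lemma \ref{lem:quo}(2). By construction $Y$ has empty interior, so in any $\myds$-family $\{Y_{r,s}\}$ representing $Y$, no $Y_{r,s}$ can have nonempty interior. The first alternative of Lemma \ref{lem:bb1} (with $n=1$) must therefore hold, so $Y$ is locally finite. Independently, Lemma \ref{lem:base2} applied with $m=0$ to $X$ itself produces, for each $a \in R$, an open interval $I \ni a$ on which $\myint(X) \cap I$ is a finite union of open intervals. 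Choosing $I$ small enough so that $Y \cap I$ is also finite, the decomposition
\[ X \cap I = (\myint(X) \cap I) \cup (Y \cap I) \]
exhibits $X \cap I$ as a finite union of open intervals and points, which is exactly (2).

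For $(2) \Rightarrow (3)$, I first observe that the topological boundary $\partial X$ is locally finite: under the local description given by (2), any boundary point of $X$ inside the relevant interval $I$ must be either one of the finitely many isolated points or an endpoint of one of the finitely many open intervals. Hence $\partial X$ is a CDD set. Any definable subset of a closed discrete set is again closed and discrete: a limit point of the subset would be a limit point of the ambient set, which is impossible since every point of $\partial X$ is isolated. Applying this to $X \setminus \myint(X) \subset \partial X$ shows it is CDD, and in particular closed, hence constructible as the complement of an open set. Therefore $X = \myint(X) \cup (X \setminus \myint(X))$ is a finite boolean combination of open sets.

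I do not foresee a genuine obstacle here; the proof is essentially an assembly of Lemma \ref{lem:quo}, Lemma \ref{lem:base2}, and the key Lemma \ref{lem:bb1}. The only conceptual step requiring care is recognising that $X \setminus \myint(X)$ is itself a $\myds$-set, so that Lemma \ref{lem:bb1} can be invoked; this is why the hypothesis that $\mathcal{R}$ is an expansion of a densely linearly ordered abelian group satisfying $\mydc$ and $\myluft$ is used throughout.
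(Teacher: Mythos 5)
Your proof is correct and follows essentially the same route as the paper: $(3)\Rightarrow(1)$ by Lemma \ref{lem:quo}(3), $(1)\Rightarrow(2)$ by splitting $X$ into $\myint(X)$ (handled by Lemma \ref{lem:base2}) and the $\myds$-set $X\setminus\myint(X)$, which is locally finite by Lemma \ref{lem:bb1}, and $(2)\Rightarrow(3)$ by observing that $X\setminus\myint(X)$ is closed and discrete, so $X$ is a union of an open and a closed set. You merely spell out details (e.g.\ why no $Y_{r,s}$ can have interior, and why a locally finite boundary forces $X\setminus\myint(X)$ to be closed) that the paper leaves implicit.
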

\begin{proof}
(1) $\Rightarrow$ (2):
The difference $X \setminus \myint(X)$ is $\myds$ by Lemma \ref{lem:quo}.
It is locally finite by Lemma \ref{lem:bb1}.
For any $x \in R$, the intersection $\myint(X) \cap I$ is a finite union of open intervals for some open interval $I$ with $x \in I$ by Lemma \ref{lem:base2}.
We get the assertion (2).

(2) $\Rightarrow$ (3):
The difference $X \setminus \myint(X)$ is locally finite by the assertion (2).
It means that $X$ is constructible.

(3) $\Rightarrow$ (1):
Immediate from Lemma \ref{lem:quo} (3).
\end{proof}

We next demonstrate that all $\myds$-sets satisfy a condition satisfied by the sets definable in a uniformly locally o-minimal structure of the second kind when $\mathcal R \models \mydc, \myluft$.
\begin{lemma}\label{lem:uniform2_2}
Let $\mathcal R=(R,<,+,0,\ldots)$ be an expansion of a densely linearly ordered abelian group  with $\mathcal R \models \mydc, \myluft$.
Consider a $\myds$-subset $X$ of $R^{m+1}$.
For any $a \in R$ and $b \in R^m$, there exist positive integers $N_1$, $N_2$, an open interval $I$ with $a \in I$ and an open box $B$ with $b \in B$ such that $X_x \cap I$ is the union of at most $N_1$ points and $N_2$ open intervals for any $x \in B$.

Furthermore, we can take $B=R^m$ if $\mathcal R \models \mylufo$.
We can take $I$ independently of $X \subset R^{m+1}$ if $\mathcal R \models \mylufs$.
\end{lemma}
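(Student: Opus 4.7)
The plan is to split $X_x \cap I$ into its ``open'' part $\myint(X_x) \cap I$, which is already controlled by Lemma \ref{lem:base2}, and its ``boundary'' part $(X_x \setminus \myint(X_x)) \cap I$, which I will handle by applying Lemma \ref{lem:base1} to an auxiliary set. Specifically, set
\[
Y := \{(x,y) \in R^m \times R\;|\; (x,y) \in X \text{ and } y \notin \myint(X_x)\},
\]
which is definable since $y \in \myint(X_x)$ is expressible as $\exists \varepsilon>0\,(y-\varepsilon,y+\varepsilon) \subset X_x$; the fiber $Y_x$ equals $X_x \setminus \myint(X_x)$.

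I would first apply Lemma \ref{lem:base2} to $X$ at $(b,a)$ to obtain a positive integer $N_2$, an open interval $I_0 \ni a$, and an open box $B_0 \ni b$ such that for every $x \in B_0$ the set $\myint(X_x) \cap I_0$ is a union of at most $N_2$ open intervals. To apply Lemma \ref{lem:base1} to $Y$, I need $Y_x$ to be discrete in $R$: by Lemma \ref{lem:quo}(2), $X_x$ is a $\myds$-subset of $R$, hence constructible by Lemma \ref{lem:dim1}, so $X_x$ is locally a finite union of points and open intervals; consequently $Y_x = X_x \setminus \myint(X_x)$ is locally finite, hence discrete in $R$. Lemma \ref{lem:base1} then supplies a positive integer $N_1$, a closed box $B_1$ with $a \in \myint(B_1)$, and an open box $B_2 \ni b$ satisfying $|Y_x \cap B_1| \leq N_1$ for every $x \in B_2$.

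I would then choose an open interval $I$ with $a \in I$ and $\overline{I} \subset I_0 \cap \myint(B_1)$, and set $B := B_0 \cap B_2$. For every $x \in B$ one has $X_x \cap I = (\myint(X_x) \cap I) \cup (Y_x \cap I)$; the first piece, being the intersection of $I$ with a union of at most $N_2$ open intervals, is itself a union of at most $N_2$ open intervals, while the second piece is contained in $Y_x \cap B_1$ and hence has at most $N_1$ points. The ``furthermore'' clauses transfer directly: if $\mathcal R \models \mylufo$, then Lemmas \ref{lem:base1} and \ref{lem:base2} both allow the parameter boxes to be $R^m$, so $B = R^m$; if $\mathcal R \models \mylufs$, then $I_0$ from Lemma \ref{lem:base2} and $B_1$ from Lemma \ref{lem:base1} can both be chosen to depend only on $a$, whence so can $I$.

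The main obstacle is verifying the discreteness of $Y_x$, without which Lemma \ref{lem:base1} yields no finite bound and the strategy collapses. This is exactly the point where the constructibility of $\myds$-subsets of $R$ supplied by Lemma \ref{lem:dim1} is indispensable; once that is in hand, the rest is bookkeeping that assembles the interior bound from Lemma \ref{lem:base2} with the boundary bound from Lemma \ref{lem:base1}.
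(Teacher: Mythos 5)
Your proposal is correct and takes essentially the same route as the paper: decompose $X_x$ into $\myint(X_x)$ and the remainder $X_x\setminus\myint(X_x)$, observe via Lemma \ref{lem:quo}(2) and Lemma \ref{lem:dim1} that the remainder is discrete, and then bound the two pieces uniformly near $(b,a)$ and intersect the resulting intervals and boxes. The only cosmetic difference is that you invoke Lemma \ref{lem:base1} for the discrete part where the paper cites Lemma \ref{lem:base2} (itself a consequence of Lemma \ref{lem:base1}), so nothing essential changes.
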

\begin{proof}
The difference $X_x \setminus \myint(X_x)$ is discrete and closed by Lemma \ref{lem:dim1} for all $x \in R^m$.
There exist a positive integer $N_1$, an open box $B$ and a closed interval $I_1$ with $a \in \myint(I_1)$ and $|(X_x \setminus \myint(X_x)) \cap I_1| \leq N_1$ for all $x \in B$ by Lemma \ref{lem:base2}.
There exist a positive integer $N_2$ and an open interval $I_2$ with $a \in I_2$ such that $\myint(X_x) \cap I_2$ consist of at most $N_2$ open intervals for all $x \in B$ also by Lemma \ref{lem:base2}.
Set $I=\myint(I_1) \cap I_2$, then $I$ satisfies the conditions in the lemma.
The `furthermore' part is obvious by Lemma \ref{lem:base2}.
\end{proof}

We investigate $\myds$-sets of dimension zero.
\begin{lemma}\label{lem:dim0}
Let $\mathcal R=(R,<,+,0,\ldots)$ be an expansion of a densely linearly ordered abelian group  with $\mathcal R \models \mydc,  \myluft$.
A $\myds$-set of dimension zero is discrete and closed.
In particular, it is constructible.
\end{lemma}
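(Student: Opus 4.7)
The plan is to establish the pointwise statement that for every $x_0 \in R^n$ some open box $B \ni x_0$ satisfies $B \cap X \subseteq \{x_0\}$. This yields both closedness (apply with $x_0 \notin X$, giving $B \cap X = \emptyset$) and discreteness (apply with $x_0 \in X$) in one stroke. The final sentence is then immediate: a closed definable set is the complement of an open definable set, hence a finite boolean combination of open sets, i.e., constructible.

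I would argue the pointwise claim by contradiction. Suppose some $x_0$ has the property that every open box around $x_0$ meets $X \setminus \{x_0\}$. Then every such box meets $X$, so $\dim_{x_0} X \geq 0$; combined with the hypothesis $\dim X = 0$ this forces $\dim_{x_0} X = 0$. By the very definition of the local dimension, there is therefore an open box $B_0 \ni x_0$ such that for every coordinate projection $\pi_i : R^n \to R$ the image $\pi_i(B_0 \cap X)$ has empty interior in $R$.

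The heart of the argument is now a reduction to the one-dimensional case. Each $\pi_i(B_0 \cap X)$ is a $\myds$-subset of $R$ by Lemma \ref{lem:quo}(1); having empty interior, Lemma \ref{lem:dim1} forces it to be, locally at $\pi_i(x_0)$, a finite union of points and open intervals, hence (no interval being available) a finite set. So I can pick open intervals $I_i \ni \pi_i(x_0)$ with $I_i \cap \pi_i(B_0 \cap X)$ finite and set $B := B_0 \cap \prod_i I_i$, obtaining an open box $B \ni x_0$ with
\[
B \cap X \;\subseteq\; \prod_{i=1}^{n} \bigl(I_i \cap \pi_i(B_0 \cap X)\bigr),
\]
which is a finite set. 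By the standing assumption on $x_0$ the set $B \cap X \setminus \{x_0\}$ is nonempty; being finite, one can shrink $B$ once more to an open box $B' \ni x_0$ avoiding those finitely many extra points, yielding $B' \cap X \subseteq \{x_0\}$ and the desired contradiction.

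The only non-routine step is the one-dimensional reduction in the third paragraph: translating "every coordinate projection of $B_0 \cap X$ has empty interior" into the effective finiteness of $B \cap X$. Everything else is unpacking the definition of $\dim_{x_0} X = 0$ together with the product inclusion $B \cap X \subseteq \prod_i \pi_i(B \cap X)$, so Lemma \ref{lem:dim1} (which itself rests on Lemma \ref{lem:bb1} and Lemma \ref{lem:base2}) is doing essentially all the work.
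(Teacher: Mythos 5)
Your proof is correct and follows essentially the same route as the paper: local dimension zero gives a box on which every one-variable coordinate projection of $X$ has empty interior, Lemma \ref{lem:dim1} then makes those projections locally finite, and shrinking the box yields $B' \cap X \subseteq \{x_0\}$, hence discreteness and closedness (and constructibility is immediate). The contradiction framing and the explicit product inclusion are only cosmetic differences from the paper's direct argument.
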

\begin{proof}
Consider a $\myds$ subset $X$ of $R^n$ of dimension zero.
Let $\pi_i$ be the projections onto the $i$-th coordinate for all $1 \leq i \leq n$.
Let $x \in R^n$ be an arbitrary point.
Take a sufficiently small open box $B$ with $x \in B$.
The projection images $\pi_i(X \cap B)$ have empty interiors for all $i$ because $\dim_xX \leq 0$.
By Lemma \ref{lem:dim1}, we may assume that $\pi_i(X \cap B)$ is empty or a singleton by shrinking $B$ if necessary.
It means that $X$ is discrete and closed.
\end{proof}

The following three lemmas are essential parts of the proof of our main theorem. 
\begin{lemma}\label{lem:interior3}
Let $\mathcal R=(R,<,+,0,\ldots)$ be an expansion of a densely linearly ordered abelian group  with $\mathcal R \models \mydc,  \myluft$.
Consider a $\myds$-subset $X$ of $R^{n+1}$.
Set 
\begin{equation*}
\mathcal I(X)=\{x \in R^n\;|\; \text{the fiber } X_x \text{ contains an open interval}\}\text{.}
\end{equation*}
It is a $\myds$-set and we have $\mypd(\mathcal I(X)) < \mypd(X)$.
\end{lemma}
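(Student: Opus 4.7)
To show $\mathcal I(X)$ is $\myds$, I fix a $\myds$-family $\{X_{r,s}\}_{r,s>0}$ with $X=\bigcup_{r,s}X_{r,s}$ and set
\[
Y_{r,s}=\{x\in [-r,r]^n\mid \exists a\in R,\ [a,a+s]\subset (X_{r,s})_x\}\quad(r,s>0).
\]
Each $Y_{r,s}$ is the image under the projection forgetting the last coordinate of the definable set
\[
Z_{r,s}=\{(x,a)\in [-r,r]^n\times R\mid \forall y\in[a,a+s],\ (x,y)\in X_{r,s}\},
\]
which is closed and bounded; since projections preserve the CBD property in a definably complete expansion, each $Y_{r,s}$ is CBD. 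The monotonicity $Y_{r,s}\subset Y_{r',s}$ for $r\le r'$ and $Y_{r,s'}\subset Y_{r,s}$ for $s\le s'$ is immediate from that of $\{X_{r,s}\}$, noting that enlarging $s$ both shrinks the fiber $(X_{r,s})_x$ and tightens the length requirement.

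To verify $\mathcal I(X)=\bigcup_{r,s}Y_{r,s}$, the inclusion $\supset$ is obvious, since $(X_{r,s})_x\subset X_x$ and any $[a,a+s]\subset X_x$ contains the open interval $(a,a+s)$. For $\subset$, given $x\in\mathcal I(X)$, the fiber $X_x\subset R$ has nonempty interior, and $\{(X_{r,s'})_x\}_{r,s'>0}$ is a $\myds$-family of CBD subsets of $R$ whose $\myds$-union is $X_x$. Lemma \ref{lem:bb1} in dimension one, whose first alternative is ruled out, produces indices $(r_0,s_0')$ such that $(X_{r_0,s_0'})_x$ has nonempty interior and therefore contains a closed interval $[a_1,b_1]$ with $a_1<b_1$. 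Choosing $r\ge\max\{r_0,|x_1|,\ldots,|x_n|\}$ and $0<s\le\min\{s_0',b_1-a_1\}$ and using the monotonicity of $\{X_{r,s}\}$ then gives $[a_1,a_1+s]\subset(X_{r,s})_x$, so $x\in Y_{r,s}$.

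For the inequality $\mypd\mathcal I(X)<\mypd X$ I may assume $\mathcal I(X)\neq\emptyset$; set $d=\mypd\mathcal I(X)\ge 0$ and pick a coordinate projection $\pi\colon R^n\to R^d$ with $\pi(\mathcal I(X))$ having nonempty interior. Let $\pi'\colon R^{n+1}\to R^{d+1}$ project onto the same $d$ coordinates together with the last one, and put $\widetilde X=\pi'(X)$, which is $\myds$ by Lemma \ref{lem:quo}(1). For each $\tilde x\in\pi(\mathcal I(X))$, lifting to $x\in\mathcal I(X)$ and letting $y$ range through an open interval of $X_x$ shows that $\widetilde X_{\tilde x}\subset R$ contains that interval, so $\pi(\mathcal I(X))\subset\mathcal I(\widetilde X)$ and $\mathcal I(\widetilde X)$ has nonempty interior.

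Now I reapply the earlier construction to $\widetilde X$: writing $\mathcal I(\widetilde X)=\bigcup\widetilde Y_{r,s}$ from a $\myds$-family $\{\widetilde X_{r,s}\}$ for $\widetilde X$, Lemma \ref{lem:bb1} supplies some $\widetilde Y_{r,s}$ with nonempty interior, and $\widetilde Y_{r,s}$ is contained in the set of $\tilde x$ whose $\widetilde X_{r,s}$-fiber contains a closed interval of length $s$. Lemma \ref{lem:interior0} then yields that the CBD set $\widetilde X_{r,s}\subset R^{d+1}$ has nonempty interior, so $\pi'(X)=\widetilde X$ does as well, giving $\mypd X\ge d+1>d=\mypd\mathcal I(X)$. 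I expect the only delicate point to be the $\subset$-inclusion in $\mathcal I(X)=\bigcup Y_{r,s}$, where the one-dimensional case of Lemma \ref{lem:bb1} is what extracts a CBD witness to the interior of an individual fiber.
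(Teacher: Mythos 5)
Your proposal is correct and follows essentially the same route as the paper: the same family $Y_{r,s}$ of points whose $X_{r,s}$-fibers contain a closed interval of definite length, Lemma \ref{lem:bb1} in dimension one to identify $\mathcal I(X)$ with $\bigcup_{r,s}Y_{r,s}$, and then the projection to $R^{d+1}$ combined with Lemma \ref{lem:bb1} and Lemma \ref{lem:interior0} for the inequality. The only differences are cosmetic: you argue directly that $\mypd X\ge \mypd\mathcal I(X)+1$ where the paper splits into the case $\myint(X)\neq\emptyset$ and a proof by contradiction, and your extra restriction to $[-r,r]^n$ and the interval normalization $[a,a+s]$ change nothing of substance.
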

\begin{proof}
Let $\{X_{r,s}\}_{r>0,s>0}$ be a $\myds$-family with $X=\bigcup_{r>0,s>0}X_{r,s}$.
Set $Y_{r,s}=\{x \in R^n\;|\; \exists t \in R,\ [t-s,t+s] \subset (X_{r,s})_x\}$.
The set $Y_{r,s}$ is CBD.
We show that $\mathcal I(X)=\bigcup_{r,s}Y_{r,s}$.
It is obvious that $\bigcup_{r,s}Y_{r,s} \subset \mathcal I(X)$.
We demonstrate the opposite inclusion.
Take an arbitrary point $x \in \mathcal I(X)$.
We have $\myint(X_x) \not= \emptyset$.
We get $\myint((X_{r,s})_x) \not=\emptyset$ for some $r>0$ and $s>0$ by Lemma \ref{lem:bb1}.
There exist $t \in R$ and $s>0$ with $[t-s,t+s] \subset (X_{r,s})_x$.
It means that $x \in Y_{r,s}$.
We have demonstrated that $\mathcal I(X)=\bigcup_{r,s}Y_{r,s}$.
In particular, $\mathcal I(X)$ is a $\myds$-set.

We next demonstrate that $\mypd(\mathcal I(X)) < \mypd(X)$.
It is obvious when $\myint(X) \not= \emptyset$ because $\mypd(\mathcal I(X)) \leq n$ and $\mypd(X)=n+1$ by the definition.
We consider the case in which $\myint(X) = \emptyset$.
Let $\pi:R^{n+1} \rightarrow R^n$ be the coordinate projection forgetting the last coordinate.
We have $\mypd \mathcal I(X) \leq \mypd \pi(X) \leq \mypd X$ because $\mathcal I(X) \subset \pi(X)$.
We lead to a contradiction assuming that $\mypd \mathcal I(X) = \mypd X$.
Set $d=\mypd X=\mypd \mathcal I(X)$.
Take a coordinate projection $\pi_1:R^n \rightarrow R^d$ with $\myint(\pi_1(\mathcal I(X))) \not= \emptyset$.
The coordinate projection $\pi_2:R^{n+1} \rightarrow R^d$ is the composition of $\pi_1$ with $\pi$.
We have $\myint(\pi_2(X)) \not= \emptyset$ because $\pi_1(\mathcal I(X)) \subset \pi_2(X)$.
The notation $\pi_3:R^{n+1} \rightarrow R$ denotes the coordinate projection onto the last coordinate.
The coordinate projection $\Pi=(\pi_2,\pi_3):R^{n+1} \rightarrow R^{d+1}$ is given by $\Pi(x)=(\pi_2(x),\pi_3(x))$.
Consider the set $T=\{x \in R^d\;|\; \Pi(X)_x \text{ contains an open interval}\}$.
We have $\pi_1(\mathcal I(X)) \subset T$.
In fact, take $x \in \mathcal I(X)$ and open interval $J \subset X_x$. 
The set $\pi_1(x) \times J$ is contained in $\Pi(X)_x$.
It means that $\pi_1(x) \in T$.
We get $\myint(T) \not=\emptyset$ because $\pi_1(\mathcal I(X))$ has a nonempty interior.
Set $T_{r,s}=\{x \in R^d\;|\;\exists t \in R,\ [t-s,t+s] \subset (\Pi(X_{r,s}))_x\}$.
The set $T$ is $\myds$ and $T=\bigcup_{r,s}T_{r,s}$ as demonstrated previously.
We have $\myint(T_{r,s}) \not= \emptyset$ for some $r>0$ and $s>0$ by Lemma \ref{lem:bb1}.
We get $\myint(\Pi(X_{r,s})) \not=\emptyset$ by Lemma \ref{lem:interior0} and we obtain $\myint(\Pi(X)) \not=\emptyset$.
It is a contradiction to the assumption that $\mypd X = d$.
\end{proof}

\begin{lemma}\label{lem:good_point_pre}
Let $\mathcal R=(R,<,\ldots)$ be an expansion of a densely linearly ordered abelian group  with $\mathcal R \models \mydc,  \myluft$.
Let $X$ be a $\myds$-subset of $R^n$ of $\mypd(X)=d$.
Take a coordinate projection $\pi:X \rightarrow R^d$ such that $\pi(X)$ has a nonempty interior.
Then, there exists a $\myds$-subset $Z$ of $R^d$ such that $Z$ has an empty interior and the fiber $X \cap \pi^{-1}(x)$ is locally finite for any $x \in R^d \setminus Z$.
\end{lemma}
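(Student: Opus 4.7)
I would reduce at the outset to the case where $\pi:R^n\to R^d$ is the projection onto the first $d$ coordinates, and for each $j\in\{d+1,\dots,n\}$ let $\pi_j:R^n\to R$ denote the projection onto the $j$-th coordinate. The plan is to apply Lemma \ref{lem:interior3} not to $X$ but to each of the $\myds$-sets $Y_j:=(\pi\times\pi_j)(X)\subset R^{d+1}$. Every $Y_j$ is a coordinate projection of $X$, hence $\myds$ by Lemma \ref{lem:quo}(1); and since any coordinate projection of $Y_j$ is also a coordinate projection of $X$, the assumption $\mypd(X)=d$ yields $\mypd(Y_j)\leq d$.

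Next, I would set
\[
Z_j:=\mathcal I(Y_j)=\{x\in R^d : (Y_j)_x\text{ contains an open interval}\},\qquad Z:=\bigcup_{j=d+1}^nZ_j.
\]
Lemma \ref{lem:interior3} gives that each $Z_j$ is $\myds$ with $\mypd(Z_j)<\mypd(Y_j)\leq d$; since $Z_j\subset R^d$, this already forces $\myint(Z_j)=\emptyset$. The set $Z$ is then a finite union of $\myds$-sets and hence is itself $\myds$ by Lemma \ref{lem:quo}(2). To see that $\myint(Z)=\emptyset$, I would argue by induction on $n-d$: if $Z$ had nonempty interior, then writing $Z=Z_{d+1}\cup(Z_{d+2}\cup\dots\cup Z_n)$ and applying Lemma \ref{lem:bb2} to this decomposition of the $\myds$-set $Z$ would force either $Z_{d+1}$ or the tail union $Z_{d+2}\cup\dots\cup Z_n$ to have nonempty interior, leading by induction to some individual $Z_j$ with nonempty interior, contradicting the previous sentence.

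It remains to verify that $X_x$ is locally finite for every $x\in R^d\setminus Z$. For such an $x$ and each $j$, one checks directly from the definitions that $(Y_j)_x=\pi_j(X_x)$, and $x\notin Z_j$ says that $\pi_j(X_x)\subset R$ has empty interior. The set $\pi_j(X_x)$ is $\myds$ by parts (1)--(2) of Lemma \ref{lem:quo}, so Lemma \ref{lem:bb1} in the $n=1$ case forces it to be locally finite in $R$. Since $X_x\subset\prod_{j=d+1}^n\pi_j(X_x)\subset R^{n-d}$ and a finite product of locally finite subsets of $R$ is locally finite in $R^{n-d}$, the fiber $X_x$ is locally finite, as required. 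The main delicate point is establishing $\myint(Z)=\emptyset$: without the $\myds$-hypothesis, a finite union of empty-interior sets could perfectly well have nonempty interior, so the argument hinges critically on Lemma \ref{lem:bb2}.
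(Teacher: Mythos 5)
Your proposal is correct, and it reaches the conclusion by a mildly but genuinely different route than the paper. The paper factors $\pi$ as a telescoping chain of single-coordinate projections $\Pi_i$, applies Lemma \ref{lem:interior3} at each intermediate stage to get sets $T_i\subset R^{n-i}$ of projective dimension $<d$, pushes them down to $R^d$ by the remaining projections to form $Z$, and then proves local finiteness of the fiber over $x\notin Z$ by an inductive construction of shrinking boxes $B_{n-d},\dots,B_0$ using Lemma \ref{lem:dim1}. You instead apply Lemma \ref{lem:interior3} directly in $R^{d+1}$ to each $Y_j=(\pi\times\pi_j)(X)$ (noting $\mypd(Y_j)\le d$ since its coordinate projections are coordinate projections of $X$), so your exceptional sets $Z_j=\mathcal I(Y_j)$ already live in $R^d$ and need no further pushforward, and you replace the box induction by the containment $X_x\subset\prod_{j=d+1}^n\pi_j(X_x)$ together with the facts that each $\pi_j(X_x)=(Y_j)_x$ is a $\myds$-subset of $R$ with empty interior, hence locally finite (Lemma \ref{lem:bb1} for $n=1$, or equivalently Lemma \ref{lem:dim1}), and that a finite product of closed discrete subsets of $R$ is locally finite. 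This buys a cleaner fiber argument and avoids the intermediate spaces $R^{n-i}$, at the cost of nothing essential; both proofs rest on the same pillars (Lemmas \ref{lem:quo}, \ref{lem:interior3}, \ref{lem:bb2}, and the one-variable structure result). Two small points: your appeal to Lemma \ref{lem:bb2} for a not-necessarily-disjoint finite union is exactly how the paper itself uses that lemma at the same spot (its proof does not use disjointness), and your resulting $Z$ need not coincide with the paper's $Z$, which is immaterial since any $\myds$-set of empty interior with the stated fiber property suffices.
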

\begin{proof}
For all $1 \leq i \leq n-d$, we can take coordinate projections $\pi_i:R^{n-i+1} \rightarrow R^{n-i}$ with $\pi = \pi_{n-d} \circ \cdots \circ \pi_1$.
We may assume that $\pi_i$ are the coordinate projections forgetting the last coordinate without loss of generality.
Set $\Pi_i=\pi_i \circ \cdots \circ \pi_1$ and $\Phi_i=\pi_{n-d} \circ \cdots \circ \pi_{i+1}$.
Consider the sets $T_i = \{x \in R^{n-i}\;|\; \pi_i^{-1}(x) \cap \Pi_{i-1}(X) \text{ contains an open interval}\}$.
The sets $T_i$ are $\myds$ and we have $\mypd(T_i) < \mypd \Pi_{i-1}(X) = \mypd X = d$ by Lemma \ref{lem:interior3}.
Set $U_i=\Phi_i(T_i) \subset R^d$ for all $1 \leq i \leq n-d$.
The projection images $U_i$ are $\myds$-sets by Lemma \ref{lem:quo}(1).
We get $\myint(U_i) = \emptyset$ because $\mypd(T_i) < d$.
Set $Z=\bigcup_{i=1}^{n-d} U_i$.
It also has an empty interior by Lemma \ref{lem:bb2}.

The fiber $X \cap \pi^{-1}(x)$ is locally finite for any $x \in R^d \setminus Z$.
In fact, let $y \in R^n$ be an arbitrary point with $x=\pi(y)$.
Set $y_0=y$ and $y_i=\Pi_i(y)$ for $1 \leq i \leq n-d$.
We have $y_{n-d}=x$ by the definition.
We construct an open box $B_i$ in $R^{n-d-i}$ for $0 \leq i \leq n-d$ such that $y_i \in B_i$ and $(\{x\} \times B_i) \cap \Pi_i(X)$ consists of at most one point in decreasing order.
When $i=n-d$, the open box $B_{n-d}=R^0$.
When $(\{x\} \times B_i) \cap \Pi_i(X)=\emptyset$, set $B_{i-1}=B_i \times R$.
We have $(\{x\} \times B_{i-1}) \cap \Pi_{i-1}(X)=\emptyset$.
When $(\{x\} \times B_i) \cap \Pi_i(X)\not=\emptyset$, the fiber $\Pi_{i-1}(X) \cap \pi_i^{-1}(y_i)$ is locally finite by Lemma \ref{lem:dim1}.
Therefore, there exists an open box $B_{i-1}$ in $R^{n-d+1-i}$ such that $\pi_i(B_{i-1})=B_i$, $y_{i-1} \in B_{i-1}$ and $(\{x\} \times B_{i-1}) \cap \Pi_{i-1}(X)$ consists of at most one point.
We have constructed the open boxes $B_i$ in $R^{n-d-i}$ for all $0 \leq i \leq n-d$.
The existence of $B_0$ implies that $X \cap \pi^{-1}(x)$ is locally finite.
\end{proof}

\begin{lemma}\label{lem:good_point}
Let $\mathcal R=(R,<,\ldots)$ be a densely linearly ordered structure.
Let $X$ be a definable subset of $R^n$ of dimension $d<n$.
Consider the set
\begin{align*}
\mathcal G(X)=\{x \in X\;|\; &\text{there exist a coordinate projection }\pi:R^n \rightarrow R^d \\
&\text{ and an open box } B \text{ with } x \in B \text{ such that } X \cap B \text{ is the graph}\\
&\text{ of a continuous map defined on } \pi(B)\}\text{.}
\end{align*}
It is definable and constructible.
Furthermore, we have $\dim(X \setminus \mathcal G(X)) < d$ if the following conditions are all satisfied:
\begin{itemize}
\item $\mathcal R=(R,<,+,0,\ldots)$ is an expansion of a densely linearly ordered abelian group  with $\mathcal R \models \mydc,  \myluft$.;
\item $X$ is a $\myds$ set;
\item Any $\myds$ set of dimension smaller than $d$ is constructible.
\end{itemize}
\end{lemma}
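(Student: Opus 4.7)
For the first claim, definability of $\mathcal G(X)$ is immediate from its explicit first-order definition. For constructibility, I plan to show $\mathcal G(X)$ is locally closed: if $y\in\mathcal G(X)$ is witnessed by a coordinate projection $\pi$ and open box $B$, then every $y'\in X\cap B$ is witnessed by the same pair, so $\mathcal G(X)\cap B = X\cap B$ coincides with the graph of a continuous map on $\pi(B)$ and is closed in $B$. A locally closed definable set, being the intersection of an open set and a closed set, is constructible.

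For the dimension bound I would argue by contradiction. Set $Y:=X\setminus\mathcal G(X)$ and assume $\dim Y=d$. Since $\mathcal G(X)$ is constructible and hence $\myds$ by Lemma~\ref{lem:quo}, $Y$ is $\myds$. Using Lemma~\ref{lem:dim_lem1} at a point of $Y$ of local dimension $d$, I restrict to a small open box $B_\ast$ with $\mypd(Y\cap B_\ast)=d$; since membership in $\mathcal G$ is a local property, replacing $X$ by $X\cap B_\ast$ is harmless, and I may assume $X$ sits inside a bounded open box and $\mypd Y=\mypd X=d$. I then fix a coordinate projection $\pi\colon R^n\to R^d$ with $\pi(Y)$ containing an open box $W$, and apply Lemma~\ref{lem:good_point_pre} to $X$ and $\pi$ to obtain a $\myds$-set $Z\subseteq R^d$ of empty interior such that $X_x$ is locally finite for every $x\notin Z$. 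The third (inductive) hypothesis forces $Z$ to be constructible, since $\dim Z<d$, so $W\setminus Z$ is dense in $W$; and for $x\in W\setminus Z$ the $\myds$-set $X_x$ is locally finite, hence discrete and closed, and it is bounded, so the standard CBD-plus-discrete-is-finite argument used in the proof of Lemma~\ref{lem:base1} shows $X_x$ is actually finite.

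I would then pick $y_1\in Y$ with $\pi(y_1)\in W\setminus Z$, available since $\pi(Y)\supseteq W$ and $W\setminus Z$ is nonempty, and aim to derive $y_1\in\mathcal G(X)$ for the contradiction. Applying $\myluft$ at $y_1$ yields a positive integer $N$, a closed box $B_y$ in the fiber direction around the last $n-d$ coordinates of $y_1$, and an open box $U$ around $\pi(y_1)$, such that $|X\cap(\{x\}\times B_y')|$ is either $\infty$ or $\leq N$ for $x\in U$ and all small $B_y'\subseteq B_y$; the $\infty$ alternative is excluded on $U\cap(W\setminus Z)$ by the fiber finiteness above. Shrinking $B_y$ so that $y_1$ is the unique point of $X$ in $\{\pi(y_1)\}\times B_y$, I would rank the at most $N$ elements of each nearby fiber by lex-order to produce sheet functions $f_1,\ldots,f_N$; the sheet carrying $y_1$ is some $f_k$. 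Applying Lemma~\ref{lem:cont_function} to this sheet should give an open sub-box $V\subseteq U$ on which $f_k$ is continuous, and the uniform bound $N$ combined with the isolation of $y_1$ should let me shrink further to an open box $B'\ni y_1$ on which $X\cap B'$ equals the graph of $f_k|_{\pi(B')}$, so $y_1\in\mathcal G(X)$.

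The main obstacle will be the sheet-decomposition step: verifying that the section $f_k$ has a $\myds$-graph (so Lemma~\ref{lem:cont_function} applies) and that, after continuity has been gained on a sub-box, the other sheets do not reenter the small neighborhood of $y_1$. The plan there is to use Lemma~\ref{lem:interior3} together with Lemma~\ref{lem:good_point_pre} to bound the projective dimension of the singular loci — the places where sheets merge or where fiber cardinality jumps — strictly below $d$, and then to remove them as constructible sets via the inductive hypothesis without losing $y_1$ or its neighborhood.
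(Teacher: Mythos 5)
Your first half is sound and follows the paper's route: the local-closedness argument for $\mathcal G(X)$, the localization via Lemma \ref{lem:dim_lem1}, the choice of $\pi$ with $\pi(Y)\supseteq W$, and the use of Lemma \ref{lem:good_point_pre} together with the inductive constructibility hypothesis to discard the bad set $Z$. The second half, however, has a genuine gap, and it is exactly at the step you flag as "the main obstacle." First, your plan is to certify that the \emph{prescribed} point $y_1$ lies in $\mathcal G(X)$, but Lemma \ref{lem:cont_function} only produces continuity of a sheet on \emph{some} nonempty sub-box $V$ of its domain, with no control on where $V$ sits; there is no way to "shrink further to an open box $B'\ni y_1$," since $\pi(y_1)$ need not belong to $V$. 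The contradiction must instead be extracted at an unspecified point of $Y$ lying over the box where continuity has been gained, which forces a different organization of the argument. Second, the tool you propose for the singular loci does not do the job: Lemma \ref{lem:interior3} controls the set of base points whose fibers contain an interval (already removed via $Z$), and says nothing about the loci where the \emph{finite} fiber cardinality jumps or where sheets merge; so you have no way to make the lex-rank sections well behaved or to show their graphs are $\myds$.

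The paper resolves both points by a reduction you are missing: using the $\myds$-sets $D_i=\{x\;|\;|\pi^{-1}(x)\cap Y|\geq i\}$ (projections of $\myds$-sets), the inductive constructibility hypothesis for lower-dimensional sets, and Lemma \ref{lem:bb2}, it passes to an open box $V$ over which the fiber counts $|Y\cap\pi^{-1}(x)|=N$ and $|X\cap\pi^{-1}(x)|=N'$ are \emph{constant}. With constant counts, a direct counting argument shows $\overline{\mathcal G(X)}\cap Y=\emptyset$, so near any point of $Y$ one has $X=Y$ locally; the lex-rank sheets are taken of $Y$ (not of $X$), their graphs are $\myds$ as projections of an explicit $\myds$-set, and Lemma \ref{lem:cont_function} applied iteratively gives a common box $W$ on which all $N$ sheets are continuous, whence \emph{every} point of $Y\cap\pi^{-1}(W)$ lies in $\mathcal G(X)$ — the contradiction. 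Without the constant-cardinality reduction and the $\overline{\mathcal G(X)}\cap Y=\emptyset$ step (which replaces your "other sheets do not reenter" hope), your sketch does not close. A smaller quibble: finiteness of the fibers over $W\setminus Z$ should be obtained from the dichotomy supplied by Lemma \ref{lem:uniform2_2} together with local finiteness, not from a "closed bounded discrete implies finite" principle, which is not available from $\mydc$ alone.
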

\begin{proof}
It is obvious that the set $\mathcal G(X)$ is definable.
We show that $\mathcal G(X)$ is constructible.
Fix an arbitrary coordinate projection $\pi:R^n \rightarrow R^d$.
Consider the set $\mathcal G(X)_{\pi}$ of points $x \in R^n$ such that $X \cap B$ is the graph of a continuous map defined on $\pi(B)$ for some open box $B$ containing the point $x$.
The set $\mathcal G(X)_{\pi}$ is locally closed because $\mathcal G(X)_{\pi}$ is locally the graph of a continuous map.
Therefore, it is constructible.
The set $\mathcal G(X)$ is also constructible because we have $\mathcal G(X)=\bigcup_{\pi}\mathcal G(X)_{\pi}$.

The difference $X \setminus \mathcal G(X)$ is $\myds$ by Lemma \ref{lem:quo}.
We next demonstrate that $\dim(X  \setminus \mathcal G(X)) < d$ under the given conditions.
When $d=0$, $X = \mathcal G(X)$ by Lemma \ref{lem:dim0}.
The lemma is obvious.

Consider the case in which $d>0$.
Note that we always have $\mathcal G(X) \cap U = \mathcal G(X \cap U)$ for any definable open subset $U$ of $R^n$ by the definition of $\mathcal G(X)$.
We also have $U \cap (X \setminus \mathcal G(X)) = (X \cap U) \setminus \mathcal G(X \cap U)$.
We use this fact without mentioning.
Set $Y=X  \setminus \mathcal G(X)$.
We lead to a contradiction assuming that $\dim(Y) = d$.
Take a point $y \in R^n$ with $\dim_yY=d$. 
We can take a coordinate projection $\pi:R^n \rightarrow R^d$ such that we have $\pi(Y \cap U)$ has a nonempty interior for any open box $U$ containing $y$.
We fix a sufficiently small open box $B$ in $R^n$ with $y \in B$.
Note that $\dim_yX=d$ because $d=\dim_yY \leq \dim_yX \leq \dim X=d$.
We can prove by an easy induction that there exists a positive integer $N_1$ with $|X \cap B \cap \pi^{-1}(x)| \leq N_1$ or $|X \cap B \cap \pi^{-1}(x)| = \infty$ for all $x \in \pi(X \cap B)$ using Lemma \ref{lem:uniform2_2}.
We omit the proof.
We also have $\dim(X \cap B)=\dim(Y \cap B)=\mypd(X \cap B)=\mypd(Y \cap B)=d$ by Lemma \ref{lem:dim_lem1}.
We may assume that 
\begin{itemize}
\item $|X \cap \pi^{-1}(x)| \leq N_1$ or $|X  \cap \pi^{-1}(x)| = \infty$ for all $x \in \pi(X)$ and
\item $d=\dim(X)=\dim(Y)=\mypd(X)=\mypd(Y)$
\end{itemize}
considering $X \cap  B$ instead of $X$.
We can take a $\myds$-subset $Z$ of $R^d$ such that $Z$ has an empty interior and $\pi^{-1}(x) \cap X$ is locally finite for any $x \in R^d \setminus Z$ by Lemma \ref{lem:good_point_pre}.
Since $\dim(Z)<d$, $Z$ is a constructible set by the assumption.
Therefore, the sets $\pi(Y) \setminus Z$ is a $\myds$-set by Lemma \ref{lem:quo}(3).
The set $\pi(Y) \setminus Z$ has a nonempty interior by Lemma \ref{lem:bb2}.
We may further assume that 
\begin{itemize}
\item $\pi(Y)$ has a nonempty interior and 
\item $|X \cap \pi^{-1}(x)| \leq N_1$ for all $x \in \pi(X)$
\end{itemize}
considering $X \cap \pi^{-1}(B')$ instead of $X$, where $B'$ is an open box contained in $\pi(Y) \setminus Z$.

We can reduce to the case in which there exists a positive integer $N$ with $|Y \cap \pi^{-1}(x)|=N$ for all $x \in \pi(Y)$.
We need the following claim:
\medskip

\textbf{Claim.} There exists a positive integer $N$ with $N \leq N_1$ such that the set $E=\{x \in R^d\;|\; |\pi^{-1}(x) \cap Y|=N\}$ is $\myds$ and $\myint(E) \not= \emptyset$.
\medskip

We begin to prove the claim.
For all $1 \leq i \leq N_1$, consider the sets $C_i = \{x \in R^d\;|\; |\pi^{-1}(x) \cap Y|=i\}$ and $D_i = \{x \in R^d\;|\; |\pi^{-1}(x) \cap Y| \geq i\}$.
The set $D_i$ is $\myds$.
In fact, $D_i$ is the projection image of the $\myds$-set $\{(x_1,\ldots, x_i) \in (R^n)^i\;|\; \pi(x_1)=\cdots=\pi(x_i), \ x_j \not=x_k \text{ for all }j \not=k,\  x_j \in Y \text{ for all }j\}$.
We demonstrate the claim by the induction on $N_1$.
When $N_1=1$, we have nothing to prove. 
We have $N=1$ and $E=\pi(Y)$.
When $N_1>1$, the set $C_{N_1}=D_{N_1}$ is $\myds$.
If $\myint(C_{N_1}) \not=\emptyset$, set $N=N_1$ and $E=C_{N_1}$.
Otherwise, we have $\dim C_{N_1} <d$.
The definable set $C_{N_1}$ is constructible by the assumption.
 By the induction hypothesis, we can get $N < N_1$ such that $E=\{x \in R^d\;|\;  |\pi^{-1}(x) \cap (Y \setminus (C_{N_1} \times R^{n-d}))|=N\}$ is $\myds$ and $\myint(E) \not= \emptyset$.
 It is obvious that $E=\{ x \in R^d\;|\;  |\pi^{-1}(x) \cap Y |=N\}$.
 We have proven the claim.
 \medskip

Let $E$ be the $\myds$-set in the claim and take an open box $B''$ contained in $E$.
Set $X'=X \cap \pi^{-1}(B'')$.
We may assume that $|Y \cap \pi^{-1}(x)|=N$ for all $x \in \pi(X)$ and $\pi(X)$ is an open box by considering $X'$ in place of $X$.
Applying the same argument to the new $X$, we can reduce to the following case:
\begin{itemize}
\item We have $\pi(Y)=\pi(X)=V$ for some open box $V$ in $R^d$;
\item There exist positive integers $N$ and $N'$ such that $|Y \cap \pi^{-1}(x)| =N$ and $|X \cap \pi^{-1}(x)| =N'$ for any $x \in V$.
\end{itemize}
We demonstrate that the closure of $\mathcal G(X)$ has an empty intersection with $Y$.
Let $y$ be a point in the intersection.
Let $\{y_1, \ldots, y_M\}$ be the fiber $\mathcal G(X) \cap \pi^{-1}(\pi(y))$, where $M=N'-N$.
We have $y \not=y_i$ for all $1 \leq i \leq M$ and $y_i \not=y_j$ for $i \not=j$.
Since $\mathcal G(X)$ is locally the graph of a continuous function, there are $y'_1, \ldots, y'_M \in \mathcal G(X)$ such that $\pi(y'_1)=\cdots=\pi(y'_M)$ and $y'_i$ are sufficiently closed to $y_i$ for $1 \leq i \leq M$.
Since $y$ is a point of the closure, there exists $y' \in \mathcal G(X)$ sufficiently close to $y$ with $\pi(y')=\pi(y'_1)$.
The fiber $\mathcal G(X) \cap \pi^{-1}(\pi(y'))$ contains the $(M+1)$ points $y'_1, \ldots, y_M$ and $y'$.
Contradiction to the fact that $|\mathcal G(X) \cap \pi^{-1}(\pi(y'))|=M$.

 Set $$Y_i=\{x \in R^n\;|\; x \text{ is the } i \text{-th minimum in }Y \cap \pi^{-1}(\pi(x))\text{ in the lexicographic order}\}$$ for all $1 \leq i \leq N$.
Consider the $\myds$-set $Z=\{(x_1, \ldots, x_N) \in (R^n)^N\;|\; \pi(x_1) = \cdots = \pi(x_N),\  x_1 < x_2 < \cdots <x_N \text{ in the lexicographic order}, \ x_i \in X \text{ for all } 1 \leq i \leq N\}$.
The definable set $Y_i$ is the projection image of the $\myds$-set $Z$, and it is also $\myds$ by Lemma \ref{lem:quo}(1).
The projection image $\pi(Y_i)$ is an open box because $\pi(Y_i)=\pi(Y)=V$.
Consequently, $Y_i$ is simultaneously a $\myds$-set and the graph of a definable map defined on an open box for any $1 \leq i \leq N$.
Applying Lemma \ref{lem:cont_function} iteratively to $Y_i$, we can find a nonempty open box $W$ such that $Y_i \cap \pi^{-1}(W)$ are the graphs of definable continuous maps defined on $W$ for all $1 \leq i \leq N$.
Since the closure of $\mathcal G(X)$ has an empty intersection with $Y$, we have $\widetilde{B} \cap Y=\widetilde{B} \cap X$ for any point $y \in Y$ and any sufficiently small open box $\widetilde{B}$ containing the point $y$.
 We have $Y \cap \pi^{-1}(W) \subset \mathcal G(X)$.
 Contradiction to the definition of $Y$.
\end{proof}

We finally get the following theorem.
\begin{theorem}\label{thm:main2}
Let $\mathcal R=(R,<,+,0,\ldots)$ be an expansion of a densely linearly ordered abelian group  with $\mathcal R \models \mydc,  \myluft$.
Any $\myds$-set is constructible.
In particular, any set definable in the open core of $\mathcal R$ is constructible.
\end{theorem}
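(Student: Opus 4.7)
The plan is to prove that every $\myds$-set is constructible by induction on the dimension $d=\dim X$, using Lemma \ref{lem:dim0} and Lemma \ref{lem:good_point} as the two engines, and then deduce the open core statement from the first half together with Lemma \ref{lem:quo}.

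For the base case I would handle $d\leq 0$: a $\myds$-set of dimension $\leq 0$ is either empty or of dimension $0$, and Lemma \ref{lem:dim0} says such a set is discrete and closed, hence in particular constructible.

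For the inductive step, fix a $\myds$-subset $X\subset R^n$ of dimension $d$ and assume every $\myds$-set of dimension strictly less than $d$ is constructible. I would split into two cases according to whether $d=n$ or $d<n$. If $d=n$, then $\myint(X)$ is open and hence constructible, and $Y:=X\setminus\myint(X)$ is $\myds$ by Lemma \ref{lem:quo}. By construction $\myint(Y)=\emptyset$, so for every $x\in R^n$ and every open box $B\ni x$ the intersection $B\cap Y$ has empty interior; since the only coordinate projection $R^n\to R^n$ is the identity, this forces $\dim_x Y<n$ for every $x$, and therefore $\dim Y<n=d$. By the induction hypothesis $Y$ is constructible, and hence so is $X=\myint(X)\cup Y$. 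If $d<n$, then the induction hypothesis guarantees the third bullet in the hypotheses of Lemma \ref{lem:good_point}, so that lemma applies to $X$: the set $\mathcal G(X)$ is constructible and $\dim(X\setminus\mathcal G(X))<d$. Since $\mathcal G(X)$ is constructible it is $\myds$ by Lemma \ref{lem:quo}(3), so $X\setminus\mathcal G(X)$ is also $\myds$ by Lemma \ref{lem:quo}(2), and the induction hypothesis renders it constructible. Therefore $X=\mathcal G(X)\cup(X\setminus\mathcal G(X))$ is constructible, completing the induction.

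For the ``in particular'' statement, any set $S$ definable in the open core of $\mathcal R$ is obtained from open definable subsets of Cartesian powers of $R$ by finite boolean operations, by taking fibers, and by projection. Every open definable set is constructible, hence $\myds$ by Lemma \ref{lem:quo}(3), and the class of $\myds$-sets is closed under each of these operations by Lemma \ref{lem:quo}(1)(2). Thus $S$ is a $\myds$-set in $\mathcal R$, and by the first part of the theorem it is constructible.

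The main obstacle I expect is just bookkeeping: making sure the induction is genuinely on a non-negative integer that strictly decreases in each appeal, and checking that the top-dimensional case $d=n$ is not left out by Lemma \ref{lem:good_point} (which is stated only for $d<n$). The observation that $X\setminus\myint(X)$ automatically drops dimension when $d=n$, via the definition of local dimension applied to the identity projection, is the one conceptual point that has to be verified carefully; everything else is routine once Lemmas \ref{lem:dim0}, \ref{lem:quo}, and \ref{lem:good_point} are in hand.
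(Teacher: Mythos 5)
Your proposal is correct and follows essentially the same route as the paper: induction on the dimension of a $\myds$-set, with Lemma \ref{lem:dim0} as the base case, Lemma \ref{lem:good_point} driving the inductive step, and Lemma \ref{lem:quo} handling the passage to the open core. Your explicit treatment of the top-dimensional case $d=n$ via $X=\myint(X)\cup(X\setminus\myint(X))$ is a sensible addition, since Lemma \ref{lem:good_point} is stated only for $d<n$ and the paper leaves that reduction implicit; the only nitpick is that closure of $\myds$-sets under complement is not literally part of Lemma \ref{lem:quo}(2) and should be routed through the just-proven equivalence with constructible sets, as you in effect do.
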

\begin{proof}
Let $X$ be a $\myds$-subset of $R^n$ of dimension $d$.
We show that $X$ is constructible by the induction on $d$.
When $d=0$, it is clear from Lemma \ref{lem:dim0}.
When $d>0$, consider the constructible set $\mathcal G(X)$ defined in Lemma \ref{lem:good_point}.
The difference $X \setminus \mathcal G(X)$ is a $\myds$-set of dimension smaller than $d$ by Lemma \ref{lem:good_point}.
It is constructible by the induction hypothesis.
Consequently, $X$ is also constructible.

It is obvious that any set definable in the open core of $\mathcal R$ is constructible because the projection image of a constructible set is again constructible by the assertion we have just proven and Lemma \ref{lem:quo}(1).
\end{proof}

\begin{proof}[Proof of Theorem \ref{thm:main}]
Theorem \ref{thm:main} is now obvious by Lemma \ref{lem:quo}(3), Lemma \ref{lem:uniform2_2} and Theorem \ref{thm:main2}.
\end{proof}

\end{document}